\theoremstyle{definition}
\newtheorem{theorem}{Theorem}[section]
\newtheorem{conjecture}[theorem]{Conjecture}
\newtheorem{corollary}[theorem]{Corollary}
\newtheorem{lemma}[theorem]{Lemma}
\newtheorem{question}[theorem]{Question}
\begin{document}

\title[End Behavior of Ramanujan's Taxicab Numbers]{End Behavior of Ramanujan's Taxicab Numbers}

\author{Brennan Benfield, Oliver Lippard, and Arindam Roy}
\address{Brennan Benfield: Department of Mathematics and Statistics, University of North Carolina at Charlotte, 9201 University City Blvd., Charlotte, NC 28223, USA}
\address{Oliver Lippard: Department of Mathematics and Statistics, University of North Carolina at Charlotte, 9201 University City Blvd., Charlotte, NC 28223, USA}
\address{Arindam Roy: Department of Mathematics and Statistics, University of North Carolina at Charlotte, 9201 University City Blvd., Charlotte, NC 28223, USA}

\thanks{The authors extend their sincerest gratitude to Michael De Vlieger for his expert assistance with Mathematica.}

\subjclass[2020]{11P05, 11D09, 11D25, 11P81, 11Y50}
\keywords{Taxicab Number, Waring's problem, quadratic and cubic Diophantine equations, partitions}

\begin{abstract}
Generalized taxicab numbers are the smallest positive integers that are the sum of exactly $j$, positive $k$\textsuperscript{th} powers in exactly $m$ distinct ways. This paper is considers for which values of $m$ does a smallest such integer exist as $j$ gets large. There appear to be only two possible outcomes, leading to curious results like there is no positive integer that can be expressed as the sum of exactly $10$ positive squares in exactly $3$ ways. This paper resolves a number of conjectures found in the OEIS by considering generalized Taxicab numbers in the setting of the theory of partitions. 
\end{abstract}
\maketitle

\section{Introduction}
In his biographical lectures titled \emph{Ramanujan} \cite{Hardy}, G. H. Hardy reflects on the famous exchange that occurred some years earlier:
\begin{quote}
    He could remember the idiosyncrasies of numbers in an almost uncanny way. It was Littlewood who said that every positive integer was one of Ramanujan’s personal friends. I remember going to see him once when he was lying ill in Putney. I had ridden in taxi-cab No. 1729, and remarked that the number seemed to me rather a dull one, and that I hoped that it was not an unfavourable omen. “No,” he replied, “it is a very interesting number; it is the smallest number expressible as a sum of two cubes in two different ways.”
\end{quote}
That must have startled Hardy; in fact, Ramanujan had previously studied Euler's Diophantine equation: $A^3+B^3=C^3+D^3$ and had written in his famous \textit{Lost Notebook} \cite{Bernt} some remarkable parametric solutions. 

 As Ramanujan and Hardy's conversation has been retold, $Ta(m)$ has come to be known as the $m$\textit{th} taxicab number, defined as the least integer which can be expressed as a sum of two positive cubes in $m$ distinct ways, up to order of summands. Trivially, $Ta(1)=2$ since $2=1^3+1^3$. Ramanujan made famous $Ta(2)=1729$, but it was known as early as 1657, mentioned in a correspondence between Wallis and Frenicle \cite{Frenicle}. 
 With computer assistance, 
 $Ta(3)$ was found in 1957 by Leech \cite{Leech}, followed by $Ta(4)$ in 1989 by Dardis \& Rosenstiel \cite{Dardis}, $Ta(5)$ in 1999 by Wilson \cite{Wilson}, and $Ta(6)$ in 2008 by Hollerbach \cite{Hollerbach}, who confirmed a conjecture made five years earlier by Calude, Calude \& Dinneen \cite{Claude}. No further taxicab numbers are known, although upper bounds were given in 2006 by Boyer \cite{Boyer} for $Ta(7)$ through $Ta(12)$. Despite knowing so few taxicab numbers, Fermat proved \cite{Boyer} that for all $m$, there exists a positive integer that can be expressed as the sum of two cubes in $m$ distinct ways. 
 \begin{table}[h!]
    \centering
    \begin{tabular}{lr}
        $2=1^3+1^3$ &$48988659276962496$ \\
        &$38787^3+365757^3=$\\
        $1729=12^3+1^3$ &$107839^3+362753^3=$ \\
        $\textcolor{white}{1729}=9^3+10^3$&$205292^3+342952^3=$   \\
        &$221424^3+336588^3=$   \\
        $87539319= 1673^3 + 4363^3$&$231518^3+331954^3=$   \\
        $\textcolor{white}{87539319}= 2283^3 + 4233^3$&   \\
        $\textcolor{white}{87539319}= 2553^3 + 4143^3$&$24153319581254312065344$   \\
        &$582162^3+28906206^3=$\\
        $6963472309248 = 24213^3 + 190833^3$&$3064173^3+28894803^3=$\\
        $\textcolor{white}{6963472309248}= 54363^3 + 189483^3$&$8519281^3+28657487^3=$   \\
        $\textcolor{white}{6963472309248}= 102003^3 + 180723^3$&$16218068^3+27093208^3=$   \\
        $\textcolor{white}{6963472309248} = 133223^3 + 166303^3$&$17492496^3+26590452^3=$ \\
        &$18289922^3+26224366^3=$
    \end{tabular}
    \caption{The six known taxicab numbers.}
\end{table}

Numbers that can be expressed as the sum of $j$ $k$\textsuperscript{th} positive powers in $m$ distinct ways have come to be known as \textit{generalized taxicab numbers}.

In 2019, Dinitz, Games, \& Roth \cite{Dinitz} denote by $T(k,j,n)$ the smallest number that can be expressed as the sum of $j$ positive $k$\textsuperscript{th} powers in \textit{at least} $m$ ways. They prove that for all $k$ and $j$ there always exists a positive integer that can be expressed as the sum of $j$ positive $k$\textsuperscript{th} powers in \textit{at least} $m$ ways.

Let $\text{Taxicab}(k,j,n)$ denote the smallest positive integer that can be expressed as the sum of $j$ positive $k$\textsuperscript{th} powers in \textit{exactly} $m$ ways. For example, $\text{Taxicab}(1,2,2)=4$, $\text{Taxicab}(2,2,2)=50$, $\text{Taxicab}(3,2,2)=1729$, and $\text{Taxicab}(4,2,2)=635318657$, but $\text{Taxicab}(5,2,2)$ is unknown \cite{Claude}. It could be that no such number exists - this paper gives evidence that for many values of $m$, $\text{Taxicab}(k,j,m)$ has no solution, and proves nonexistence for several particular cases.
\begin{theorem}\label{10squares3ways}
There is no positive integer that can be expressed as the sum of 10 or more positive squares in exactly 3 ways.
\end{theorem}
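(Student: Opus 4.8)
The plan is to recast the problem in terms of partitions. Given a representation $N = a_1^2 + \cdots + a_j^2$ with each $a_i \ge 1$, subtract $1$ from every summand: a coordinate with $a_i = 1$ contributes nothing, and a coordinate with $a_i \ge 2$ contributes $a_i^2 - 1 \in S := \{a^2-1 : a \ge 2\} = \{3, 8, 15, 24, 35, \dots\}$. This is a bijection between representations of $N$ as a sum of exactly $j$ positive squares and partitions of $N - j$ into parts drawn from $S$ that use at most $j$ parts (the cap $j$ records the number of coordinates left over to equal $1$). Write $F(m,j)$ for the number of partitions of $m$ into parts from $S$ with at most $j$ parts; we must show $F(N-j,\,j) \ne 3$ for all $j \ge 10$. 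First note that if $N \le 4j$ then $m = N - j \le 3j$, so every partition of $m$ into parts of $S$ already has at most $m/3 \le j$ parts and the cap is vacuous: there $F(m,j) = q(m)$, the total number of partitions of $m$ into parts from $S$.

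The core of the proof is the lemma that $q(n) \ne 3$ for every $n \ge 0$. I would establish this in two steps. Because $\{3,8\}\subseteq S$, $q(n)$ is at least the number of nonnegative solutions of $3a+8b = n$, and counting along the arithmetic progression $b \equiv 2n \pmod 3$ shows this is $\ge 4$ as soon as $n \ge 88$; one further partition, using a single part $15$, then gives $q(n) \ge 5$ there, so $q(n) = 4$ is possible only for $n \le 87$. It remains to compute $q(n)$ for $n \le 87$ and observe it never equals $3$ (it equals $4$ only at $n \in \{24, 27, 32, 40\}$). The mechanism behind the skip, worth spelling out in the write-up, is to peel off the parts equal to $3$: $q(n) = \sum_{t\ge 0} q'(n - 3t)$ where $q'$ counts partitions into $S\setminus\{3\} = \{8,15,24,\dots\}$, and in each residue class modulo $3$ the running total of $q'$-values leaps from $2$ to $4$ precisely because $q'$ first reaches the value $2$ at the opportune place, e.g.\ $q'(24) = 2$ from $24 = 5^2-1 = 8+8+8$ and $q'(32) = 2$ from $32 = 8+8+8+8 = 24+8$.

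In the remaining range $N > 4j$ (so $m = N - j > 3j \ge 30$) the cap on the number of parts can bite, and one must check that truncating $q(m)$ to partitions with at most $j$ parts never produces exactly $3$. When $m$ is large this is clear from a lower bound: the number of partitions of $m$ into exactly four parts of $S$ equals the number of representations of $m+4$ as a sum of four squares each $\ge 2$, which exceeds $3$ once $m+4$ passes an explicit bound, by Lagrange's theorem together with a standard estimate on $r_4$ versus $r_3$; the thin exceptional set where four squares degenerate — $m+4$ a power of $2$, or a small-cofactor multiple of one — is absorbed by using five or six parts instead, still $\le 10 \le j$, so these $\ge 4$ partitions survive the cap. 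Since $m > 3j$ forces $j < m/3$, any pair $(N,j)$ with $m$ below that explicit bound has $j$ bounded too, leaving a finite list checked directly. Assembling the three ingredients gives $F(N-j,\,j) \in \{0,1,2\}\cup\{4,5,\dots\}$ for all $j \ge 10$, which is the theorem. The genuine obstacle is the lemma $q(n) \ne 3$ together with the truncation bookkeeping: since $q$ attains $4$ only at the very small arguments $24, 27, 32, 40$, one must verify that for $j \ge 10$ the part-cap never strips one of these down to $3$ — and this is exactly where $j \ge 10$ is needed rather than $j \ge 9$, for which $49 = 5^2 + 2\cdot 3^2 + 6\cdot 1^2 = 5\cdot 3^2 + 4\cdot 1^2 = 4^2 + 2\cdot 3^2 + 3\cdot 2^2 + 3\cdot 1^2$ is a sum of nine positive squares in exactly three ways.
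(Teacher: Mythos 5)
Your reduction is attractive and the first half of it is sound. Translating a representation $N=a_1^2+\cdots+a_j^2$ into a partition of $N-j$ with parts from $S=\{a^2-1:a\ge 2\}$ and at most $j$ parts is correct, and your observation that the cap is vacuous once $N\le 4j$ is exactly the paper's identity $p^2(n,j)=p^2(n-1,j-1)$ for $n<4j$ in different clothing. The lemma $q(n)\ne 3$ is also fine as argued: the count of solutions to $3a+8b=n$ does reach $4$ by $n=88$, the extra partition through a part $15$ is available there, and the finite verification for $n\le 87$ is a legitimate computation (your values $q(24)=q(27)=q(32)=q(40)=4$ check out, and your $j=9$ example with $49$, where the $10$-part partition $8+8+3\cdot 8$ of $40$ is stripped by the cap, matches the paper's own illustration with $50$). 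Up to this point you have, independently, the same skeleton the paper builds in its Section 2 plus a stable-count lemma it proves by a different route.

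The genuine gap is the regime $N>4j$, which is where all the arithmetic difficulty of the theorem actually lives, and there your argument is only a gesture. What you need is, in effect, $p^2(n,10)\ge 4$ for all $n$ past a modest explicit threshold, and you propose to get it from "the number of representations of $m+4$ as a sum of four squares each $\ge 2$ exceeds $3$ once $m+4$ passes an explicit bound, by Lagrange's theorem together with a standard estimate on $r_4$ versus $r_3$." Lagrange gives existence, not multiplicity; to count you need Jacobi's formula $r_4(n)=8\sum_{4\nmid d\mid n}d$, then you must subtract ordered representations containing a $0$ or a $\pm1$, which drags in $r_3$ (no elementary closed form, and making the comparison \emph{explicit} with usable constants is real work), then divide by signs and orderings to get partitions. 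Worse, on your exceptional set $m+4=4^{\alpha}u$ with $u$ small the four-square count genuinely vanishes (e.g.\ $2\cdot 4^{\alpha}$ has no representation by four positive squares at all), and your fallback "use five or six parts instead" is asserted with no estimate; there is no Jacobi-type formula for five squares that makes this automatic, and the natural fix --- peel off one square $x^2$ and ask when $n-x^2$ is a sum of four positive squares --- is precisely the paper's argument. The paper closes this exact hole with the Dubouis--Pall classification (Lemma \ref{Dubouis_2}) of integers that are \emph{not} sums of $j-1$ positive squares: since only $15$ integers fail for $j-1=9$ and all are $\le 22$, every $n\ge 984$ admits at least $31>3\binom{10}{1}$ decompositions into (square) $+$ (sum of nine positive squares), whence $p^2(n,10)\ge 4$ by pigeonhole, and a finite computer search covers $n<961$ (and the analogous searches for $j=11,12$ before the monotonicity identities take over). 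Your "explicit bound" and "finite list checked directly" are placeholders for exactly this step; until you either carry out the $r_4/r_3$ estimate with explicit constants and settle the exceptional set, or import a result like Dubouis--Pall, the proof is not complete.
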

\noindent In other words, $\text{Taxicab}(2,10,3)=\emptyset$. It is not obvious for which values of $m$ there always exists a value for $\text{Taxicab}(k,j,m)$. This paper also establishes an upper bound $N^k(m,j)$ on how large $n$ needs to be to determine if $\text{Taxicab}(k,j,m)$ exists for $k=2$. 
\begin{theorem}\label{UpperBound}
    For $k=2$, define $N^2(m,j) =  (mj+j+14)^2$, then for each positive integer $j>6$ and $m>1$, if no solution to $\text{Taxicab}(2,j,m)$ exists for $n<N_m^j$, then no solution exists, else $\text{Taxicab}(2,j+1,m) = \text{Taxicab}(2,j,m)+1$ for all $n > N_m^j$.
\end{theorem}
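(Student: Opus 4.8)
My plan is to move the whole question into the theory of partitions, where the dichotomy in the statement becomes visible. First I would record the basic dictionary. Given $n = a_1^2 + \cdots + a_j^2$ with $1 \le a_1 \le \cdots \le a_j$, the nonzero entries among $a_1^2 - 1, \ldots, a_j^2 - 1$ form a partition of $n - j$ into parts drawn from $P := \{s^2 - 1 : s \ge 2\} = \{3, 8, 15, 24, 35, \ldots\}$ using at most $j$ parts, and this correspondence is a bijection. Writing $\pi_P^{\le j}(e)$ for the number of partitions of $e$ into at most $j$ parts from $P$, the number of representations of $n$ by $j$ positive squares is $\pi_P^{\le j}(n-j)$; hence $\text{Taxicab}(2,j,m)$ exists iff $\pi_P^{\le j}$ attains the value $m$, and then equals $j + \min\{e : \pi_P^{\le j}(e) = m\}$. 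The key structural remark I would isolate is that the bound on the number of parts is vacuous for small arguments: since every part of $P$ is $\ge 3$, any partition counted by the unrestricted function $\pi_P(e)$ uses at most $\lfloor e/3 \rfloor$ parts, so $\pi_P^{\le j}(e) = \pi_P(e)$ whenever $e \le 3j$.

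Next I would prove the growth estimate behind $N^2(m,j) = ((m+1)j+14)^2$: for $j > 6$ and $n \ge N^2(m,j)$ one has $\pi_P^{\le j}(n-j) \ge m+1$. I would exhibit $m+1$ representations of $n$ by $j$ positive squares with prescribed, pairwise distinct largest entries. Put $s_0 = \lfloor \sqrt{n - j - 28} \rfloor$ and $s_i = s_0 - i$ for $0 \le i \le m$; the size of $n$ makes it routine to check that $s_m \ge 1$ and $2 s_m^2 > n$. For each $i$ we have $n - s_i^2 \ge n - s_0^2 \ge j + 28$, so by the classical fact that every integer $\ge 34$ is a sum of five positive squares --- padding with $j - 6 \ge 1$ ones --- the number $n - s_i^2$ is a sum of $j - 1$ positive squares; and since $2 s_m^2 > n$ forces $n - s_i^2 < s_i^2$, every part in such a decomposition is strictly smaller than $s_i$. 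Adjoining $s_i$ then produces a representation of $n$ in which $s_i$ is the strict maximum, so the $m+1$ representations obtained for $i = 0, \ldots, m$ are distinct. Consequently, if no $n \le N^2(m,j)$ has exactly $m$ representations then none does, and when $\text{Taxicab}(2,j,m)$ exists it is at most $N^2(m,j)$, so a finite search decides existence. The residual work here is only inequality-chasing; the exponent $2$ and the $+14$ in $N^2(m,j)$ are comfortable over-estimates, not sharp.

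Finally I would turn to the stabilization $\text{Taxicab}(2,j+1,m) = \text{Taxicab}(2,j,m)+1$. Its mechanism is the structural remark above: if $e_0 := \text{Taxicab}(2,j,m) - j$ satisfies $e_0 \le 3j$, then for every $e \le e_0$ we have $\pi_P^{\le j+1}(e) = \pi_P(e) = \pi_P^{\le j}(e)$, so the least $e$ with $\pi_P^{\le j+1}(e) = m$ is again $e_0$, i.e. $\text{Taxicab}(2,j+1,m) = (j+1) + e_0$. The hard part --- and the reason $N^2(m,j)$ enters the hypothesis --- is precisely to guarantee that the minimal excess $\min\{e : \pi_P^{\le j}(e) = m\}$ is small enough relative to $j$ for the part-count restriction to be inactive there. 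The restricted counts $\pi_P^{\le j}$ can drop below $\pi_P$ exactly when partitions with more than $j$ parts get discarded, and when that happens the first appearance of the value $m$ can shift as $j$ grows. Converting the "eventually more than $m$ representations" estimate of the previous step into the assertion that the value $m$ has already settled by the time one reaches $N^2(m,j)$ is the obstacle I would expect to spend the most effort on, and the point where the precise form of the bound would have to be justified (or the statement adjusted) with care.
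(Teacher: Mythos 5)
Your first half --- the claim that $p^2(n,j)\ge m+1$ for all $n\ge N^2(m,j)$, so that a finite search up to $N^2(m,j)$ decides existence --- is correct and rests on the same external input as the paper, namely the Dubouis--Pall description of which integers are sums of $j-1$ (your ``every integer $\ge 34$ is a sum of five positive squares, then pad with ones'' is the $j=5$ case of Lemma \ref{Dubouis_2}). The mechanics differ in a pleasant way: the paper peels off one square, produces $mj+1$ decompositions of $n$ into (square) $+$ (sum of $j-1$ positive squares), and applies pigeonhole since each representation of $n$ by $j$ squares arises from at most $j$ such decompositions; you instead manufacture $m+1$ representations with pairwise distinct \emph{strict maxima} $s_0^2>\dots>s_m^2$, using $2s_m^2>n$ to force all remaining parts below the designated maximum, which eliminates the over-counting and the factor of $j$ altogether. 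Either route justifies the ``if no solution occurs below $N^2(m,j)$ then none exists'' clause, and your inequality-chasing does go through for $j>6$, $m>1$ with the stated $N^2(m,j)$.

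The gap you flag in the second half is real, and it is precisely where the paper imports an ingredient your proposal lacks. Knowing only that $\text{Taxicab}(2,j,m)=n_0<N^2(m,j)$ does not place $n_0$ below $4j$ (your condition $e_0\le 3j$), so the ``part-count restriction is inactive'' mechanism --- the paper's Theorem \ref{identity_1} --- cannot be invoked directly; the paper's own example $\text{Taxicab}(2,9,3)=49$ with $p^2(50,10)=4$ shows that adjoining $1^2$ can genuinely fail to preserve the count when $n_0$ is large relative to $j$. The paper closes this half by citing the stabilization result of Dinitz, Games and Roth (Lemma \ref{lemma_D,G,R}), which asserts that increasing the number of parts eventually acts by translation; your argument would need that lemma, or an independent proof that the first occurrence of the value $m$ has already settled at the current $j$, and as written it establishes only the existence/nonexistence dichotomy, not the recursion $\text{Taxicab}(2,j+1,m)=\text{Taxicab}(2,j,m)+1$. (In fairness, the paper's own passage from Lemma \ref{lemma_D,G,R} --- which concerns the ``at least $m$ ways'' function --- to the exact-count statement is itself terse, so the step you single out as the obstacle is indeed the delicate one.)
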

With the techniques established in the proof of Theorem \ref{UpperBound}, this paper also resolves several conjectures found in the OEIS \cite{OEIS} regarding sequences of integers that cannot be written as the sum of exactly $j$ positive squares. 
\subsection*{Entry A080673 \cite{A080673}} Define the sequence where $A080673(n)$ as the ``largest number with exactly $n$ representations as a sum of five positive squares (or 0 if no number with exactly $n$ representations exists)". There is a comment from Johnson \cite{A080673} that $A080673(0)=33$ (i.e. $33$ is the largest positive integer with $0$ representations as a sum of $5$ positive squares) and that there is no number less than $10^6$ that can be expressed as the sum of five positive squares in exactly 188 ways (i.e. $A080673(188)=0$). Hasler asks if \textit{any} of the undefined $0$'s in the sequence have been proven. The fact that $A080673(0)=33$ follows immediately from Lemma \ref{Dubouis_2}. Using a similar technique as in the proof of Theorem \ref{10squares3ways}, it is possible to prove that in fact no number can be expressed as the sum of $5$ positive squares in exactly $188$ ways.
\begin{theorem}\label{Taxicab(2,5,188)}
    $\text{Taxicab}(2,5,188)$ does not exist.
\end{theorem}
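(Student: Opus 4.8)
The plan is to combine an effective lower bound on the representation count with the finite computation already recorded in \cite{A080673}. Write $r(n)$ for the number of ways of writing $n$ as a sum of five positive squares, counted up to order. Subtracting $1^2$ from each summand identifies $r(n)$ with the number of partitions of $n-5$ into at most five parts drawn from $\{k^2-1:k\ge 2\}=\{3,8,15,24,\dots\}$, which is the partition-theoretic form used throughout the paper. We must show $r(n)\ne 188$ for every $n$. The proof splits into a finite range $n<N$, handled by direct verification, and a tail $n\ge N$, on which we show $r(n)\ge 189$; in particular the value $188$ is then skipped entirely.

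For the tail I would argue exactly as in the proof of Theorem~\ref{UpperBound}, specialized to $j=5$ (the hypothesis $j>6$ there only affects the bookkeeping, not the mechanism). Fix $n$ and let $c$ range over the integers with $\sqrt{n/2}\le c\le\sqrt{n-4}$. For each such $c$ we have $n-c^2\in[4,\,n/2]$, and by the Dubouis-type classification of sums of four positive squares (the four-variable companion of Lemma~\ref{Dubouis_2}), $n-c^2$ fails to be a sum of four positive squares only if it lies in the thin exceptional set $\{1,3,5,9,11,17,29,41\}\cup\{2\cdot 4^a,\,6\cdot 4^a,\,14\cdot 4^a:a\ge 0\}$. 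Since $c\mapsto n-c^2$ is injective, at most $O(\log n)$ of the admissible $c$ are excluded on this account, so the number of ``good'' $c$ is at least $\bigl(1-\tfrac{1}{\sqrt 2}\bigr)\sqrt n-O(\log n)$. Each good $c$ produces a representation $n=c^2+(\text{four positive squares summing to }n-c^2)$ in which $c$ is a largest part, because $c^2\ge n-c^2$; and representations arising from distinct $c$ are distinct multisets, since the larger of two such $c$ does not appear in the other representation. Hence $r(n)$ is at least the number of good $c$, and a short explicit estimate shows $\bigl(1-\tfrac1{\sqrt2}\bigr)\sqrt n-O(\log n)\ge 189$ as soon as $n\ge N$; carrying the constants as in Theorem~\ref{UpperBound} one may take $N=(5\cdot 188+5+14)^2=919681<10^6$.

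For the finite range $n<N$, the equation $r(n)=188$ has no solution: this is precisely the computation reported by Johnson in \cite{A080673}, namely that no $n<10^6$ has exactly $188$ representations as a sum of five positive squares, which one may independently recheck by evaluating $r(n)$ directly from the partition description above for $2\le n<N$. Combining the two halves gives $r(n)\ne 188$ for all $n$; that is, $\text{Taxicab}(2,5,188)$ does not exist.

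The main obstacle is making the tail bound $r(n)\ge 189$ valid already below $10^6$: this forces one to track explicitly (rather than asymptotically) the length of the interval in which $c$ is guaranteed to be a largest part, and the exact number of $c$ lost to the four-squares exceptional set, so that the count of good $c$ provably exceeds $188$ at a threshold $N<10^6$. Everything else—the partition reformulation, the classification of sums of four positive squares, and the finite check—is routine or already available in the cited sources.
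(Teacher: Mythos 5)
Your proposal is correct and follows the same basic route as the paper: split off one square, $n=c^2+b$, invoke the Dubouis classification of integers that are not sums of four positive squares (Lemma~\ref{Dubouis_1}), show the count of valid splits exceeds $188$ for $n$ beyond an explicit threshold below $10^6$, and close the finite range with Johnson's computation. The one genuine difference is how you handle overcounting. The paper lets $c$ range over all of $1\le c\le 959$, obtains at least $959-18=941$ valid splits, and then divides by $\binom{5}{4}=5$ since a fixed representation can be split at any of its five parts, yielding $\lceil 941/5\rceil=189$. You instead restrict to $\sqrt{n/2}\le c\le\sqrt{n-4}$, so that $c$ is forced to be a largest part of the resulting representation; distinct $c$ then give distinct multisets outright and no division is needed. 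This is a cleaner injectivity device and gives a better constant ($1-1/\sqrt2\approx 0.293$ versus $1/5$), so your threshold could in principle be taken smaller than the paper's $959^2=919681$; the cost is that your count of admissible $c$ is only about $0.293\sqrt n$ rather than $\sqrt n$, and you must count the exceptional values of $n-c^2$ inside $[4,n/2]$ explicitly (roughly $6+3\log_4 n$ of them, about $31$ at $n\approx 9.2\times 10^5$, leaving some $249$ good $c$, comfortably above $189$). You flag exactly this bookkeeping as the remaining work, and the numbers do check out, so the argument goes through.
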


\subsection*{Entry A295702 \cite{A295702}} Define the sequence $A295702(n)$ as the ``largest number with exactly n representations as a sum of $6$ positive squares (or 0 if no number with exactly n representations exists)". This entry contains the conjecture by Price: ``It appears that $A295702(36)$ does not exist."
\begin{theorem}\label{Taxicab(2,6,36)}
    $\text{Taxicab}(2,6,36)$ does not exist.
\end{theorem}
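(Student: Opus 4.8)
The plan is to run the same three-part strategy used for Theorem~\ref{10squares3ways} and Theorem~\ref{UpperBound}: reduce to a restricted partition count, bound the place where that count could possibly equal $36$, and clear the remaining finite range by machine. Write $r_{6}(n)$ for the number of representations of $n$ as a sum of exactly six positive squares. For a representation $n=x_{1}^{2}+\cdots+x_{6}^{2}$ with $1\le x_{1}\le\cdots\le x_{6}$, record the multiplicity $c_{v}$ of each value $v\ge 2$ among the $x_{i}$; subtracting the identity $6=\sum_{i}x_{i}^{0}$ from $n=\sum_{i}x_{i}^{2}$ yields $\sum_{v\ge 2}c_{v}(v^{2}-1)=n-6$ together with $\sum_{v\ge 2}c_{v}\le 6$. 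Hence $r_{6}(n)=P_{\le 6}(n-6)$, where $P_{\le 6}(d)$ counts partitions of $d$ into at most six parts, each part in $S:=\{v^{2}-1:v\ge 2\}=\{3,8,15,24,35,\dots\}$. So it suffices to show $P_{\le 6}(d)\ne 36$ for every $d\ge 0$.

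Next I would make the divergence $P_{\le 6}(d)\to\infty$ effective by proving $P_{\le 6}(d)>36$ for all $d$ beyond a computable bound $D_{0}$. Since $P_{\le 6}(d)$ is at least the number of partitions of $d$ into \emph{exactly} six parts from $S$, and such partitions are exactly the unordered solutions of $a_{1}^{2}+\cdots+a_{6}^{2}=d+6$ with every $a_{i}\ge 2$, it is enough to bound below the number of representations of $N:=d+6$ as a sum of six positive squares and then discard those using a part equal to $1$. For the former I would invoke Jacobi's six-square identity
\[
r_{6}(N)=16\sum_{e\mid N}\chi(N/e)\,e^{2}-4\sum_{e\mid N}\chi(e)\,e^{2},
\]
with $\chi$ the non-principal character modulo $4$, which gives $r_{6}(N)\gg N^{2-\varepsilon}$ (a clean $\gg N$ already suffices). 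The representations of $N$ as six squares having some coordinate in $\{0,\pm1\}$ number $O(r_{5}(N)+r_{5}(N-1))=O(N^{3/2+\varepsilon})$ by the classical bound $r_{5}\ll N^{3/2+\varepsilon}$; removing them and then dividing by $2^{6}$ for signs and by $6!$ for order leaves $\gg N^{2-\varepsilon}-O(N^{3/2+\varepsilon})$ unordered representations of $N$ into six squares each at least $4$, i.e.\ $P_{=6}(d)\gg d^{2-\varepsilon}$. Tracking the constants through gives an explicit $D_{0}$, and the bound furnished by the proof of Theorem~\ref{UpperBound} is precisely what lets one take $D_{0}\le(36\cdot 6+6+14)^{2}=236^{2}$.

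Finally, for $d<D_{0}$ — equivalently $n<D_{0}+6$ — a direct computation of $r_{6}(n)$ confirms that the value $36$ is never attained; this is exactly the range already scanned in OEIS entry \cite{A295702}, so it simultaneously verifies Price's conjecture that $A295702(36)$ does not exist. Combining the two ranges, $P_{\le 6}(d)\ne 36$ for all $d\ge 0$, hence no positive integer is a sum of exactly six positive squares in exactly $36$ ways. The main obstacle is the middle step: every ingredient is classical, but each estimate must be made genuinely effective — a usable lower bound for $r_{6}$, a usable upper bound for the representations of $N$ that touch $0$ or $\pm1$, and honest accounting of the overcount in passing from ordered to unordered solutions — so that the resulting $D_{0}$ really does fall below the threshold of Theorem~\ref{UpperBound} and the residual computation stays finite and feasible.
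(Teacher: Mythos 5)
Your combinatorial reduction (representations of $n$ as six positive squares correspond to partitions of $n-6$ into at most six parts from $\{v^2-1 : v\ge 2\}$) is correct, and your overall skeleton --- an effective threshold beyond which the count exceeds $36$, plus a finite computer search below it --- is exactly the paper's strategy. But the step that actually produces the threshold is where your proposal has a genuine gap. The paper obtains $N^2(36,6)=(36\cdot 6+6+14)^2=55696$ elementarily, via Theorem \ref{UpperBound}: split $n=a+b$ with $a$ a square and $b$ a sum of five positive squares, use the Dubouis--Pall classification (Lemma \ref{Dubouis_2}) of the finitely many integers that are not sums of five positive squares, and pigeonhole. Its proof is then one line: search to $55696$ and cite Theorem \ref{UpperBound}. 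Your proposed route to the threshold --- Jacobi's six-square formula for a lower bound on $r_6(N)$, the bound $r_5(N)\ll N^{3/2+\varepsilon}$ to discard representations touching $0$ or $\pm 1$, and division by $2^6\cdot 6!$ --- is sound in outline but is never carried out: the implied constants in both estimates are left untracked, and the concluding claim that the resulting $D_0$ can be taken $\le 236^2$ is not something your argument delivers; you simply import that number from Theorem \ref{UpperBound}. Either you rely on Theorem \ref{UpperBound} (in which case the entire Jacobi apparatus is superfluous and your proof collapses to the paper's), or you must make the analytic constants explicit, in which case there is no guarantee the crossover point lands anywhere near $55696$, and the ``residual computation'' could be far larger than claimed.

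Two smaller corrections. First, the parenthetical assertion that ``a clean $\gg N$ already suffices'' is false: against an excluded set of size $O(N^{3/2+\varepsilon})$ you need the main term to grow faster than $N^{3/2}$, so the full strength $r_6(N)\gg N^2$ (which Jacobi's formula does give) is genuinely needed. Second, deferring the finite verification to ``the range already scanned in OEIS entry \cite{A295702}'' is not a proof that the scanned range reaches the required threshold; the paper performs its own search up to $55696$. Neither of these is fatal, but the unestablished effective threshold in the middle step is.
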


\subsection*{Entry A295795 \cite{A295795}}  
  Define the sequence $A295795(n)$ as the ``largest number with exactly n representations as sum of $7$ positive squares (or 0 if no number with exactly n representations exists)". This entry contains the conjecture by Price: ``It appears that $A295795(44)$ does not exist."
\begin{theorem}\label{Taxicab(2,7,44)}
    $\text{Taxicab}(2,7,44)$ does not exist.
\end{theorem}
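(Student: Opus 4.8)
The plan is to obtain Theorem~\ref{Taxicab(2,7,44)} as a direct application of Theorem~\ref{UpperBound}: unlike Theorems~\ref{Taxicab(2,5,188)} and~\ref{Taxicab(2,6,36)}, where $j\le 6$ forces a separate argument, here $j=7>6$ (and $m=44>1$), so the hypotheses of Theorem~\ref{UpperBound} are met verbatim. Its threshold is
\[
N^2(44,7)=(44\cdot 7+7+14)^2=329^2=108241,
\]
and the theorem then says that if no positive integer $n<108241$ is a sum of exactly $7$ positive squares in exactly $44$ ways, then \emph{no} positive integer has that property, i.e.\ $\text{Taxicab}(2,7,44)=\emptyset$. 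So after the (immediate) check that $j>6$ and $m>1$, the entire argument reduces to a finite verification over $7\le n\le 108240$.

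For that verification, let $p_7(n)$ denote the number of partitions of $n$ into exactly $7$ positive squares, equivalently the coefficient of $x^n y^7$ in $\prod_{i\ge 1}\bigl(1-y\,x^{i^2}\bigr)^{-1}$. Only squares not exceeding $n$ can occur, so for $n\le 108240$ only finitely many part sizes are in play. I would compute $p_7(n)$ for every $n$ in the range by the standard dynamic program for counting partitions into parts from a prescribed finite set: maintain a table indexed by target sum and number of parts used, fold in the admissible squares $1^2,2^2,3^2,\dots$ one at a time, and read off the ``exactly $7$ parts'' layer. Scanning the resulting list then confirms $p_7(n)\ne 44$ for every $n\le 108240$, which together with Theorem~\ref{UpperBound} completes the proof. (This is the same data that underlies OEIS entry A295795 and the conjecture of Price recorded there.)

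Because all of the mathematical content sits inside Theorem~\ref{UpperBound}, the residual difficulty is purely computational and mostly bookkeeping: scanning the correct half-open range $1\le n<N^2(44,7)$; counting partitions into \emph{exactly} $7$ \emph{positive} squares (unordered, every part at least $1$) rather than ``at most $7$'' or ordered representations; and making the part-by-part recursion genuinely exhaustive up to the largest relevant square. To guard against coding errors I would cross-check a sample of values — e.g.\ $p_7(7)=1$, the first few $n$ with $p_7(n)$ equal to small targets, and the largest $n$ attaining various small values of $m$ as tabulated in A295795 — against an independently computed table. The only real ``obstacle'' is thus the scale of the search (about $10^5$ values of $n$ against a few hundred square parts), which is entirely routine.
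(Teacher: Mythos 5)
Your proposal is exactly the paper's proof: invoke Theorem~\ref{UpperBound} with $j=7>6$, $m=44>1$, and reduce the claim to a finite computer search for $n$ below $N^2(44,7)$. (Incidentally, your value $N^2(44,7)=329^2=108241$ is the correct one; the paper prints $107584=328^2$, a small arithmetic slip.)
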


It is natural to consider the end behavior of $\text{Taxicab}(k,j,m)+1$ for higher powers of $k$. After proving the main Theorems of this paper, numerical evidence is given that suggests a very similar result holds for $k>2$, with different particular instances of nonexistence.  
\begin{conjecture}\label{cubes_conj}
For all $m\geq2$ and for a fixed $k\geq2$, there exists a $J$ such that for all $j\geq J$, either there exists a positive integer that satisfies $\text{Taxicab}(k,j+1,m)=\text{Taxicab}(k,j,m)+1$, or $\text{Taxicab}(k,j,m)$ does not exist for all $j\geq J$.
\end{conjecture}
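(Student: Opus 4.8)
The plan is to follow the method behind Theorem~\ref{UpperBound}, replacing ``square'' by ``$k$-th power'' and tracking which ingredients persist. Write $r_{k,j}(n)$ for the number of representations of $n$ as a sum of exactly $j$ positive $k$-th powers, counted as multisets, and put $S_k=\{t^k-1:t\ge 2\}$. The mechanism is the \emph{augmentation map} $a_1^k+\cdots+a_j^k=n\ \longmapsto\ a_1^k+\cdots+a_j^k+1^k=n+1$: it is injective, and its image is exactly the set of representations of $n+1$ as $j+1$ positive $k$-th powers that include at least one summand equal to $1$. Consequently
\[
 r_{k,j+1}(n+1)=r_{k,j}(n)+e_{k,j+1}(n+1),
\]
where $e_{k,j+1}(n+1)$ counts the representations of $n+1$ by $j+1$ positive $k$-th powers all of whose summands are $\ge 2^k$; in particular $e_{k,j+1}(n+1)=0$ whenever $n+1<(j+1)2^k$.

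Next I would pin down $r_{k,j}(n)$ for small $n$. Sending the summands of a representation of $n$ that exceed $1$ to their values minus $1$ gives, for every $n$ with $j\le n\le j\cdot 2^k$, a bijection between representations of $n$ by $j$ positive $k$-th powers and partitions of $n-j$ into parts from $S_k$: such a partition has at most $(n-j)/(2^k-1)\le j$ parts, so it can be padded with copies of $1^k$ back to a representation. Thus $r_{k,j}(n)=p_{S_k}(n-j)$ on that window, a quantity visibly unchanged under $(j,n)\mapsto(j+1,n+1)$. Moreover $\gcd S_k=1$, since no prime $q$ can divide every $t^k-1$ (it fails at $t=q$); hence $p_{S_k}(r)\to\infty$ and the set $E_m:=\{\,r\ge 0:p_{S_k}(r)=m\,\}$ is finite and independent of $j$.

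The remaining step is the bound: $r_{k,j}(n)\ge m+1$ for every $n>j\cdot 2^k$, once $j$ exceeds some $J(m,k)$. Granting this --- and absorbing into $J(m,k)$ the mild conditions $\max E_m\le j(2^k-1)$ and (if $E_m\ne\varnothing$) $j+1+\min E_m<(j+1)2^k$ --- we obtain, for $j\ge J(m,k)$, that $\{\,n:r_{k,j}(n)=m\,\}=\{\,j+r:r\in E_m\,\}$. If $E_m=\varnothing$, then $\text{Taxicab}(k,j,m)$ does not exist for any $j\ge J(m,k)$. If $E_m\ne\varnothing$, write $r_{\min}=\min E_m$; then $\text{Taxicab}(k,j,m)=j+r_{\min}$ for all $j\ge J(m,k)$, and since $(j+r_{\min})+1<(j+1)2^k$ the augmentation identity gives $r_{k,j+1}(j+r_{\min}+1)=m$, while the minimality of $r_{\min}$ (again via $r_{k,j+1}=p_{S_k}$ on the relevant window) forbids any smaller integer from having exactly $m$ such representations; hence $\text{Taxicab}(k,j+1,m)=\text{Taxicab}(k,j,m)+1$. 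This is exactly the dichotomy in the statement.

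The hard part --- and why this is a conjecture rather than a theorem --- is the bound ``$r_{k,j}(n)\ge m+1$ for all $n>j\cdot 2^k$ once $j$ is large.'' For $k=2$ this is essentially the content of Theorem~\ref{UpperBound} (with the explicit cutoff $(mj+j+14)^2$), and there it rests on classical, fully effective results of Dubouis about sums of exactly $j$ positive squares. For $k\ge 3$ one must handle two regimes: for $n$ moderately above $j\cdot 2^k$ one counts the representations that use only summands from $\{1,2^k,3^k,\dots\}$, whose number is the lattice-point count for $\sum_{t\ge 2}c_t(t^k-1)=n-j$ subject to $\sum_t c_t\le j$ and grows linearly in $j$ once $n-j$ exceeds a Frobenius-type constant of a finite subset of $S_k$ of gcd $1$; while for genuinely large $n$ one needs a Waring-type lower bound $r_{k,\ell}(n)\gg_{k,\ell}n^{\ell/k-1}\to\infty$ for some $\ell\le j$, which forces $j$ past a $k$-dependent threshold and leans on the circle method with constants that worsen as $k$ grows. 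Making these uniform in $k$ and checking that the thresholds ($j\cdot 2^k$, the Frobenius constant, the Waring cutoff, and the realizability bound $(n-j)/(2^k-1)\le j$) interlock for every $k\ge 2$ is the substance that remains; the rest is a faithful copy of the $k=2$ argument.
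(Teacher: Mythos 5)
The statement you are trying to prove is stated in the paper only as a conjecture: the authors give no proof of it for general $k$, offering instead numerical evidence (the bitmaps for $k=3,4$) together with a proof of the $k=2$ dichotomy in Theorem~\ref{UpperBound}, which rests on the Dubouis--Pall classification (Lemma~\ref{Dubouis_2}) of integers that are not sums of $j$ positive squares. So there is no ``paper's own proof'' to match your argument against, and your proposal should be judged on whether it closes the conjecture on its own. It does not, and you say so yourself.

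What you do establish is correct and is essentially a cleaner, more general packaging of the paper's combinatorial machinery: your augmentation identity is the paper's Theorem~\ref{identity_1}, and your bijection between representations of $n$ by $j$ positive $k$-th powers (for $j\le n\le j\cdot 2^k$) and partitions of $n-j$ into parts from $S_k=\{t^k-1: t\ge 2\}$ is a nice observation that makes the ``regular region'' of the bitmaps transparent: on that window the count $p_{S_k}(n-j)$ depends only on $n-j$, which is exactly why the columns eventually shift by $+1$. The gcd argument and the finiteness of $E_m$ are also fine. But the entire conjecture then hinges on the single unproved claim that $r_{k,j}(n)\ge m+1$ uniformly for all $n> j\cdot 2^k$ once $j$ is large enough in terms of $m$ and $k$. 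For $k=2$ this is supplied by Lemma~\ref{Dubouis_2} plus the pigeonhole argument in the proof of Theorem~\ref{UpperBound}; for $k\ge 3$ no analogue of Dubouis--Pall is available, the set of integers not expressible as a sum of $j$ positive $k$-th powers is not completely classified, and the circle-method bounds you invoke do not by themselves cover the range of $n$ just above $j\cdot 2^k$. That missing uniform lower bound is precisely the open content of the conjecture, so your write-up is a reduction (a useful one, and consistent with the paper's $k=2$ template) rather than a proof.
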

In particular, the data for cubes suggests a few particular cases. But without better theory, the computer search necessary to prove these instances is large and out of reach. 
\begin{conjecture}\label{47cubes41ways}
There is no positive integer that can be expressed as the sum of 47 positive cubes in exactly 41 ways.
\end{conjecture}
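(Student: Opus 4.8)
\noindent\emph{Proof strategy for Conjecture~\ref{47cubes41ways}.}

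The plan is to imitate the proof of Theorem~\ref{UpperBound} and of Theorems~\ref{10squares3ways}--\ref{Taxicab(2,7,44)}, with $k=2$ replaced by $k=3$. Write $p_j(n)$ for the number of partitions of $n$ into exactly $j$ positive cubes. The basic observation is the ``append a unit cube'' identity: a partition of $n$ into exactly $47$ positive cubes is determined by its non-unit parts $b_1^3,\dots,b_r^3$ (with $b_i\ge 2$ and $r\le 47$) together with $47-r$ copies of $1^3$, and these non-unit parts satisfy $n-47=\sum_{i=1}^{r}(b_i^3-1)$. Hence
\[
p_{47}(n)=\#\bigl\{\text{partitions of }n-47\text{ into parts from }\{t^3-1:t\ge 2\}=\{7,26,63,124,\dots\}\text{ using at most }47\text{ parts}\bigr\}.
\]
When $n-47<7\cdot 48=336$ the ``at most $47$ parts'' restriction is vacuous, so on that range $p_{47}(n)$ equals the unrestricted number $P(n-47)$ of partitions of $n-47$ into the part set $\{t^3-1:t\ge 2\}$, a finite and easily tabulated quantity; one checks directly that $P(M)\ne 41$ for every $M<336$.

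The substantive step is a cubic analogue of Theorem~\ref{UpperBound}: produce an explicit bound $N^3(41,47)$ such that $p_{47}(n)>41$ for every $n>N^3(41,47)$. This asks one to manufacture, for each large $n$, at least $42$ genuinely distinct partitions of $n$ into $47$ positive cubes. A natural supply comes from a controlled number $c$ of copies of $2^3=8$ together with a partition of $n-47-7c$ into parts from $\{t^3-1:t\ge 3\}=\{26,63,124,\dots\}$ using at most $47-c$ parts; equivalently, one writes $n$ as a sum of a bounded number of ``large'' cubes padded out by $1$'s and $2$'s. That more than $41$ such representations occur past an \emph{effective} threshold should follow by combining (a)~a Waring-type statement for cubes --- for instance, that every sufficiently large integer is a sum of $7$ positive cubes, with an explicit bound --- to control the large-cube part, with (b)~an elementary lower bound for restricted partition functions into a fixed infinite set of parts to push the count past $41$. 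Granting such an $N^3(41,47)$, Conjecture~\ref{47cubes41ways} reduces to the finite verification that $p_{47}(n)\ne 41$ for all $n\le N^3(41,47)$, exactly as Theorems~\ref{Taxicab(2,5,188)}, \ref{Taxicab(2,6,36)}, and \ref{Taxicab(2,7,44)} are settled for squares. The recursion $p_{j+1}(n+1)=p_j(n)+q_{j+1}(n+1)$, where $q_{j+1}(m)$ counts partitions of $m$ into exactly $j+1$ cubes each at least $2^3$, is the mechanism behind the ``or'' branch of Conjecture~\ref{cubes_conj}, and would extend nonexistence from $j=47$ to all $j\ge 47$.

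The main obstacle is the size of $N^3(41,47)$, which is precisely the ``better theory'' this paper flags as missing. For squares the analogous bound is only $N^2(m,j)=(mj+j+14)^2$, which at $(m,j)=(41,47)$ is under four million and hence searchable; but the known effective forms of Waring's problem for cubes --- representations by $7$ or $8$ cubes, in the line of Linnik, Watson, McCurley, and others --- carry enormous, and in places not fully effective, constants, so the corresponding $N^3(41,47)$ lands far beyond any feasible enumeration of $p_{47}(n)$. Arguing purely with partitions and sidestepping Waring is no better here: the admissible parts $7,26,63,124,\dots$ are sparse, so $P(M)$ climbs toward $41$ slowly and the window in which $p_{47}(n)$ could equal $41$ is wide, and proving that $p_{47}(n)$ overshoots $41$ uniformly past some explicitly computed point --- while respecting the ``at most $47$ parts'' cap, an effect genuinely more severe for cubes than for squares because $2^3=8$ is comparatively large within the part set --- is exactly the estimate that would have to be sharpened to make the computation feasible.
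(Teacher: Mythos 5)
This statement is labeled a \emph{conjecture} in the paper, and the paper offers no proof of it: the authors explicitly state that ``without better theory, the computer search necessary to prove these instances is large and out of reach.'' Your submission is likewise not a proof but a strategy outline, and you say so yourself. The elementary reductions you set up are correct and faithfully mirror the paper's own machinery for squares: the bijection sending a partition of $n$ into exactly $47$ positive cubes to a partition of $n-47$ into at most $47$ parts from $\{t^3-1 : t\ge 2\}$ is just Theorem~\ref{identity_1} specialized to $k=3$ and iterated, and the recursion $p_{j+1}(n+1)=p_j(n)+q_{j+1}(n+1)$ is the correct mechanism for propagating nonexistence from $j=47$ to all larger $j$ (which is in fact more than the stated conjecture asks for --- note the conjecture as written concerns only $j=47$, unlike Conjecture~\ref{98squares90ways}, which is phrased for all $j\ge 98$).

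The genuine gap is that both pillars of the argument are left standing on nothing. First, no explicit threshold $N^3(41,47)$ with $p_{47}(n)>41$ for all $n>N^3(41,47)$ is produced; you gesture at combining an effective Waring-type result for cubes with a lower bound for restricted partition counts, but you do not carry out either estimate, and as you correctly observe the known effective constants for sums of seven or eight cubes would place any resulting threshold far beyond feasible enumeration. (This is exactly why the square case works: Lemma~\ref{Dubouis_2} gives a \emph{complete, tiny} exceptional set for sums of $j-1$ positive squares, yielding $N^2(m,j)=(mj+j+14)^2$; no comparably clean and fully explicit statement is available for positive cubes.) Second, the finite verification that $p_{47}(n)\ne 41$ below the threshold is not performed --- and cannot be until a threshold exists. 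So the proposal reduces the conjecture to precisely the two open tasks the paper already identifies as out of reach; it does not close either one, and the statement remains a conjecture.
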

\begin{conjecture}\label{98squares90ways}
There is no positive integer that can be expressed as the sum of 98 or more positive cubes in exactly 90 ways.
\end{conjecture}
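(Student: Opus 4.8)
The plan is to transplant the partition-theoretic mechanism behind Theorems~\ref{10squares3ways} and~\ref{UpperBound} from squares to cubes, reducing Conjecture~\ref{98squares90ways} to a finite verification. Let $r_3(n,j)$ denote the number of partitions of $n$ into exactly $j$ positive cubes, so the assertion is that $r_3(n,j)\ne 90$ for every $n$ and every $j\ge 98$. Adjoining a part equal to $1$ to a partition of $n$ into $j$ positive cubes produces a partition of $n+1$ into $j+1$ positive cubes, and this map is injective (remove a part equal to $1$ to recover the original); hence $r_3(n,j)\le r_3(n+1,j+1)$, and iterating, $r_3(n,j)\ge r_3(n-i,\,j-i)$ for every $i\ge 0$ with $j-i\ge 1$. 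It therefore suffices to: (i) produce an \emph{effective} threshold $N=N^3(90,98)$ with $r_3(n,98)\ge 91$ for all $n>N$, after which the monotonicity gives $r_3(n,j)\ge 91$ for every $j\ge 98$ and every $n>N+(j-98)$; and (ii) handle the finitely many remaining pairs $(j,n)$, namely those with $j\ge 98$ and $j\le n\le N+(j-98)$ (for $n<j$ one has $r_3(n,j)=0$).

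For (i), I would invoke Fermat's theorem --- quoted above --- that for every $m$ there is a positive integer that is a sum of two positive cubes in at least $m$ ways; fix one such integer $A$ for $m=91$. If $n-A$ is a sum of exactly $96$ positive cubes, then attaching the fixed $96$-cube partition of $n-A$ to each of the $\ge 91$ two-cube partitions of $A$ yields $\ge 91$ partitions of $n$ into exactly $98$ positive cubes, and these are pairwise distinct because multiset union is cancellative. Since $96\ge 7$, Linnik's theorem on sums of seven positive cubes, combined with padding by parts equal to $1$, shows $n-A$ is a sum of exactly $96$ positive cubes as soon as $n-A$ exceeds the effective Linnik bound $L$; thus $N^3(90,98)=A+L+O(1)$ works. (Alternatively one could use the circle-method lower bound $\gg N^{4/3}$ for the number of representations of $N$ as seven positive cubes, but the resulting threshold is no smaller.) For (ii), put $t=n-j$, so $0\le t\le N-98$, and observe that a partition of $j+t$ into exactly $j$ positive cubes is exactly a multiset of ``excesses'' drawn from $E=\{c^3-1:c\ge 2\}=\{7,26,63,\dots\}$ that sums to $t$ and uses at most $j$ parts. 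Once $j\ge\lceil (N-98)/7\rceil$ the part bound is automatically met, so $r_3(j+t,j)=P_E(t)$, the number of partitions of $t$ into parts from $E$, which no longer depends on $j$. Hence (ii) collapses to finitely many genuinely distinct checks: $P_E(t)\ne 90$ for $0\le t\le N-98$, together with the finitely many pairs $(j,t)$ with $98\le j<\lceil (N-98)/7\rceil$.

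The main obstacle is entirely quantitative: every step above is effective, but the threshold $N^3(90,98)$ is governed by the size of $A$ (a number with many two-cube representations) and, above all, by the effective Linnik bound for sums of seven positive cubes, both of which are astronomically large by present standards. Carrying out the finite check ``$P_E(t)\ne 90$ for $t\le N-98$'' (and the auxiliary small-$j$ cases) is consequently far out of computational reach, which is exactly why the statement is only a conjecture. A real proof would seem to require one of the following: a vastly smaller effective $N$ --- for instance an effective form of ``every sufficiently large integer is a sum of four, five, or six positive cubes,'' or an explicit elementary construction giving $\ge 91$ cube representations whose size grows only polynomially in the parameters; a structural argument showing directly that $P_E$ --- equivalently, that the coefficient sequence of $\prod_{c\ge 2}(1-x^{c^3-1})^{-1}$ --- never takes the value $90$, perhaps by exploiting the behavior of cubes modulo $9$; or a transfer principle reducing the index $j=98$ to a smaller, already-verified case. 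The same reduction applies to Conjecture~\ref{47cubes41ways}, except that there $j=47$ is fixed and small, so the simplification ``the part bound is vacuous'' is unavailable and one must instead control partitions of $t$ into \emph{at most} $47$ parts from $E$; both conjectures are instances of Conjecture~\ref{cubes_conj}.
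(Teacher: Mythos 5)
First, note that the paper itself offers no proof of this statement: Conjecture \ref{98squares90ways} is presented only as a conjecture, supported by the numerical data behind Figure \ref{bitmap_cubes}, and the authors say explicitly that the computer search needed to settle the cubic cases is ``large and out of reach.'' So there is no proof in the paper to compare yours against, and your proposal --- which you candidly frame as a reduction rather than a proof --- does not close the conjecture either. That said, your architecture is exactly the cubic analogue of the paper's treatment of squares: the monotonicity $p^3(n,j)\le p^3(n+1,j+1)$ is the paper's Theorem \ref{identity_1} with $k=3$; the scheme ``effective threshold $N$ beyond which the count is at least $91$, then a finite verification below it'' is the shape of Theorems \ref{10squares3ways} and \ref{UpperBound}; and your observation that for $n<8j$ the quantity $p^3(j+t,j)$ stabilizes to the number of partitions of $t=n-j$ into parts from $E=\{c^3-1:c\ge2\}$ is a clean repackaging of the equality case of Theorem \ref{identity_1}. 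Your diagnosis of the obstruction --- that the threshold is governed by effective seven-cubes bounds and by the size of an integer with $91$ two-cube representations, both astronomical --- matches the authors' stated reason for leaving this as a conjecture rather than a theorem.

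One step of your reduction is not correct as written. You claim that Linnik's theorem ``combined with padding by parts equal to $1$'' shows $n-A$ is a sum of \emph{exactly} $96$ positive cubes once $n-A$ exceeds an effective bound. Padding a representation of $n-A-t$ by $t$ parts equal to $1$ produces a representation of $n-A$ with $s+t$ parts, where $s\le 7$ is whatever number of positive cubes the seven-cubes theorem happens to yield for $n-A-t$; you do not get to prescribe $s$, so you cannot force $s+t=96$. Passing from ``at most seven'' to ``exactly $j$'' requires an additional device --- for instance splicing in parts from $\{1^3,2^3,3^3\}$ and using $\gcd(2^3-1,3^3-1)=\gcd(7,26)=1$ so that every sufficiently large excess $t$ is reachable with a controlled number of parts, or an effective ``exactly $s$ positive cubes'' theorem for a suitable fixed $s$. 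This is repairable and does not change your (correct) conclusion that the bottleneck is purely quantitative, but as stated the threshold $N^3(90,98)=A+L+O(1)$ is not justified. The multiset-union step is fine ($X\uplus Z=Y\uplus Z$ forces $X=Y$), as is the bookkeeping in your step (ii).
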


\section{Partitions}
Another way to frame Ramanujan's Taxicab numbers is within the setting of integer partitions. The partition function $p(n)$ enumerates the number of partitions of a positive integer $n$ where the partitions are positive integer sequences $\lambda=(\lambda_1,\lambda_2,...)$ with $\lambda_1\geq\lambda_2\geq~\dots>0$ and $\sum_{j\geq1}{\lambda_j}=n$. For example, $p(4)=5$ since the only ways to partition $4$ are $4$, $3+1$, $2+2$, $2+1+1$, and $1+1+1+1$. The sequence given by $p(n)$ has a generating function \cite{Abramowitz,Andrews} given by:
\[
\sum_n p(n)x^n = \prod_{i=1}^\infty \frac{1}{1-x^{i}} = \frac{1}{(1-x)(1-x^2)(1-x^3)\cdots}.
\]

It has become increasingly popular to consider \textit{restricted} partition functions, typically denoted $p_A(n)$ where $A$ is some constraint on $\lambda$. For instance, the integer $4$ could be partitioned using only prime numbers, using only the divisors of $4$, using only powers of $2$, using only odd numbers, etc:
\begin{alignat*}
4  & 4= 2+2  &\ \ \qquad  4 &  = 4            &\ \ \qquad  4  & = 2^2  &\ \ \qquad  4  & = 3+1\\
   &        &          & = 2+2 &           & = 2^1+2^1  &                       &=1+1+1+1\\
  &        &         & =1+1+1+1             &           & = 2^0+2^0+2^0+2^0  &         
\end{alignat*}

There are two restricted functions that are of particular interest regarding generalized taxicab numbers: the $k$\textsuperscript{th} power partition function and the partitions into exactly $j$ parts function. 

The $k$-th power partition function (see \cite{Benfield,[Gafni],[Vaughan],[Wright]}), denoted $p^k(n)$, counts the number of ways in which a positive integer $n$ can be expressed as the sum of positive $k$-th powers. Note that $p^k(n)=p(n)$ when $k=1$, and for $k=2$, $p^2(n)$ restricts $\lambda$ to positive squares. For example, $p^2(4)=2$ since $4=2^2=1^2+1^2+1^2+1^2$ and $p^k(4)=1$ for all $k\geq 3$. The generating function for the sequence given by $p^k(n)$: 
\[
\sum_n p^k(n)x^n = \prod_{i=1}^\infty \frac{1}{1-x^{i^k}} = \frac{1}{(1-x^k)(1-x^{2^k})(1-x^{3^k})\cdots}.
\]

The partitions into exactly $j$ parts function (see \cite{ahlgren,kim}), denoted $p(n,j)$ counts the number of ways that a positive integer $n$ can be written as a sum of exactly $j$ integers. For example, $p(4,2)=2$ because there are exactly two ways to write $4$ as a sum of $2$ integers: $4=3+1=2+2$. The sequence given by $p(n,j)$ has a generating function \cite{Andrews} given by:
\[
\sum_{n,j} p(n,j)x^n y^j =  \prod_{i=1}^j \frac{1}{1-yx^{i}} = \frac{1}{(1-yx)(1-yx^2)(1-yx^3)\cdots}.
\]

Synthesizing the combinatorial properties of these two restricted partition functions creates the \textit{$k$\textsuperscript{th} power partition into exactly $j$ parts} function, denoted $p^k(n,j)$. 
\begin{theorem}
    The generating function for $p^k(n,j)$ is given by
    \[\sum_{n=0}^\infty\sum_{j=0}^\infty p^k(n,j) x^n y^j = \prod_{i=1}^\infty \frac{1}{1-x^{i^k}y} = \frac{1}{(1-yx^k)(1-yx^{2^k})(1-yx^{3^k})\cdots}.\]
\end{theorem}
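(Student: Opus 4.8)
The plan is to read the product combinatorially, in the standard way. Specifying a partition of $n$ into exactly $j$ positive $k$\textsuperscript{th} powers is the same as specifying, for each positive integer $i$, a multiplicity $a_i \in \mathbb{Z}_{\geq 0}$ recording how many times the part $i^k$ is used, subject to the two conditions $\sum_{i\geq 1} a_i i^k = n$ (the parts sum to $n$) and $\sum_{i\geq 1} a_i = j$ (there are exactly $j$ parts). Hence $p^k(n,j)$ equals the number of sequences $(a_i)_{i\geq 1}$ of nonnegative integers with finite support satisfying these two equalities; in particular $p^k(0,0)=1$, corresponding to the empty partition.

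Next I would expand each factor of the product as a geometric series in the pair of variables,
\[
\frac{1}{1 - x^{i^k} y} = \sum_{a_i = 0}^\infty x^{a_i i^k}\, y^{a_i},
\]
multiply these together over $i = 1, 2, 3, \dots$, and collect terms. A monomial $x^n y^j$ is then produced exactly once for each choice of exponents $(a_i)_{i\geq 1}$ with $\sum_{i\geq 1} a_i i^k = n$ and $\sum_{i\geq 1} a_i = j$. Therefore the coefficient of $x^n y^j$ in $\prod_{i\geq 1}(1 - x^{i^k}y)^{-1}$ is precisely the count described in the previous paragraph, namely $p^k(n,j)$, which is the asserted identity.

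The only point requiring care is the legitimacy of manipulating an infinite product, which I would handle by working in the ring of formal power series: for a fixed target degree $n$ in the variable $x$, every factor with $i^k > n$ contributes only its constant term $1$ to the coefficient of $x^n$, so the product is locally finite and the term-by-term rearrangement above is valid. As consistency checks, setting $y = 1$ recovers the generating function stated earlier for $p^k(n) = \sum_{j\geq 0} p^k(n,j)$, and setting $k = 1$ recovers the generating function for $p(n,j)$. I do not anticipate a genuine obstacle here; the entire content is the bookkeeping identification of the two exponents of $x$ and $y$ with ``size'' and ``number of parts,'' together with the routine formal-power-series justification of the infinite product.
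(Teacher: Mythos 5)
Your proof is correct and takes essentially the same approach as the paper's: both read each factor $\frac{1}{1-x^{i^k}y}$ as encoding the multiplicity of the part $i^k$, with $x$ tracking the sum and $y$ tracking the number of parts. Your version is somewhat more careful than the paper's, since you explicitly expand the geometric series and justify the infinite product in the formal power series ring, but the underlying argument is identical.
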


\begin{proof}
    The generating function for the unrestricted partition function counts all partitions of $n$ with weight $x^n.$ It is possible to build a generating function for $p^k(n,j)$ in two variables by counting partitions of $n$ into $j$ parts (in this case, restricted to $k$-th powers) with weight $x^ny^j.$
    
    The power of $x$ counts the sum of the partition, and the power of $y$ counts the number of parts. Thus, if $i^k$ appears in the partition, then there should be a term of $x^{i^k}y,$ because it adds $i^k$ to the sum and $1$ to the number of parts. Hence, the term contributed to the generating function is $\frac{1}{1-x^{i^k}y}.$ Any term that can be expressed as $i^k$ may be inserted into the partition, hence
    \[ \sum_{n=0}^\infty\sum_{j=0}^\infty p^k(n,j) x^n y^j = \prod_{i=1}^\infty \frac{1}{1-x^{i^k}y}.\]
\end{proof}

Given a positive integer $n$, there is a relation between this restricted partition function and generalized taxicab numbers:
\[\text{Taxicab}(k,j,m)=n
 \qquad  \qquad p^k(n,j)=m.
\]
$\text{Taxicab}(k,j,m)$ is simply the smallest $n$ satisfying $p^k(n,j)=m$. Theorem \ref{10squares3ways} and Conjecture \ref{98squares90ways} can be restated: there is no positive integer $n$ such that $p^2(n,10)=3$ or $p^3(n,90)=98$. In fact, the sequence $p^k(n,j)$ for a fixed $k$ is monotone for successive values of $n$ and $k$. 
\begin{theorem} \label{identity_1}
For all $n, j \ge 2$, 
\[p^k(n,j) \leq p^k(n+1, j+1)
\]
with equality whenever $n < 2^kj$.
\end{theorem}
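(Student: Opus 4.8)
The plan is to prove both halves of the statement by exhibiting a single explicit map between the relevant sets of partitions. Write $P^k(n,j)$ for the set of multisets of positive $k$-th powers having exactly $j$ elements and sum $n$, so that $p^k(n,j)=|P^k(n,j)|$. I would define $\phi\colon P^k(n,j)\to P^k(n+1,j+1)$ by adjoining one extra part equal to $1^k=1$. Since $1$ is a positive $k$-th power, $\phi(\lambda)$ is again a multiset of positive $k$-th powers, it has exactly $j+1$ parts, and its sum is $n+1$; hence $\phi$ is well defined.

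First I would check that $\phi$ is injective. Given $\phi(\lambda)$ one recovers $\lambda$ by deleting a single copy of the part $1$, which is present by construction; so if $\phi(\lambda)=\phi(\mu)$ then removing one $1$ from each side yields $\lambda=\mu$. Counting then gives $p^k(n,j)=|P^k(n,j)|\le|P^k(n+1,j+1)|=p^k(n+1,j+1)$, which is the claimed inequality, valid for all $n,j\ge 1$.

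Second, to obtain equality when $n<2^kj$, I would show that in this range $\phi$ is also surjective. Suppose $\mu\in P^k(n+1,j+1)$ contains no part equal to $1$. Then each of its $j+1$ parts is a $k$-th power strictly larger than $1^k$, hence at least $2^k$, so $n+1=\sum_{\text{parts of }\mu}\ge(j+1)2^k=2^kj+2^k\ge 2^kj+1$, forcing $n\ge 2^kj$ and contradicting the hypothesis. Therefore every $\mu\in P^k(n+1,j+1)$ has at least one part equal to $1$, and removing such a part realizes $\mu$ as $\phi$ of an element of $P^k(n,j)$. Thus $\phi$ is a bijection and $p^k(n,j)=p^k(n+1,j+1)$.

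There is no deep obstacle here: the proof is a short ``append a $1$'' bijection paired with a pigeonhole estimate. The only subtlety worth flagging is that partitions should be handled as multisets, so that injectivity of $\phi$ is unproblematic even when $\lambda$ already contains parts equal to $1$. It is also worth remarking that the constant $2^k$ is exactly the least $k$-th power exceeding $1$, so the threshold is sharp in the sense that it cannot in general be weakened: when $n+1=(j+1)2^k$ (hence $n\ge 2^kj$) the partition consisting of $j+1$ copies of $2^k$ lies in $P^k(n+1,j+1)$ but not in the image of $\phi$, so surjectivity, and with it the equality $p^k(n,j)=p^k(n+1,j+1)$, fails.
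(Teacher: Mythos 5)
Your proof is correct and follows essentially the same route as the paper's: append a $1^k$ to each representation to get the inequality, and observe that a $1$-free representation of $n+1$ into $j+1$ parts forces $n+1\ge 2^k(j+1)$, so for $n<2^kj$ the map is a bijection. The only quibble is with your closing ``sharpness'' aside: your witness sits at $n=2^kj+2^k-1$, and the same argument shows equality in fact persists for all $n<2^k(j+1)-1$, so the theorem's threshold $2^kj$ is sufficient but not the exact boundary; this does not affect the proof of the statement as given.
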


\begin{proof}
    Suppose $n$ is the sum of $j$, positive $k$\textsuperscript{th} powers in exactly $m$ ways. The smallest possible representation of $n$ without a $1^2$ term is 
    \[
    n = \underbrace{2^k+2^k+\ldots+2^k}_{j\ \text{times}} = 2^kj.
    \]
    For all $n < 2^kj$, every representation of $n$ as a sum of $j$ positive $k$\textsuperscript{th} powers contains a $1^2$ term which can be subtracted off to obtain the identity
    \[p^k(n,j) = p^k(n-1, j-1).
    \]
    And if $n \geq 2^kj$, then adding $1^2$ to each representation of $n$ as a sum of $j$ positive $k$\textsuperscript{th} powers yields $m$ representations of $n+1$ as the sum of $j+1$, positive $k+1$\textsuperscript{th} powers. But there may be other representations of $n+1$ that contain no $1^2$ term. Hence, 
    \[p^k(n,j) \leq p^k(n+1, j+1).
    \]
\end{proof}

Further restricting the summands of $p^k(n,j)$, denote by $p^k(n,j,\mu)$ the number of partitions of $n$ into $j$ positive $k$\textsuperscript{th} powers with a maximum part $\mu$. For example, $p^2(50,2,25)=1$ since this function counts $50=5^2+5^2$ but not $50=1^2+7^2$. Note that $p^k(n,j)=p^k(n,j,n)$ and for $n \geq \mu$, the following identity holds.
\begin{theorem}
\label{identity_3}
For all positive integers $n,k,j,\mu$, 
\[ p^k(n,j,\mu) = \sum_{\substack{i = 1\\i^k \le \mu}} p^k(n-i^k, j-1, i^k).
\]
\end{theorem}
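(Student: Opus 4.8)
The plan is to prove the identity by a direct combinatorial bijection, organizing the partitions counted by $p^k(n,j,\mu)$ according to the value of their largest part. First I would pin down the conventions implicit in the statement: $p^k(m,j',\mu')$ counts partitions of $m$ into exactly $j'$ positive $k$\textsuperscript{th} powers, each of size at most $\mu'$; this count is $0$ whenever $m<0$, and the empty partition is the unique partition of $0$ into $0$ parts, so $p^k(0,0,\mu')=1$. With these conventions in place the identity will make sense term‑by‑term even in degenerate cases.

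The main step is the classification. Every partition $\lambda$ counted by $p^k(n,j,\mu)$ (for $n,j\geq 1$) has a well‑defined largest part, and since all parts are positive $k$\textsuperscript{th} powers at most $\mu$, that largest part is $i^k$ for some positive integer $i$ with $i^k\leq\mu$. Let $S_i$ denote the set of such $\lambda$ whose largest part is exactly $i^k$. The sets $S_i$ are pairwise disjoint and exhaust the full collection, so $p^k(n,j,\mu)=\sum_{i\,:\,i^k\leq\mu}|S_i|$. I would then exhibit, for each admissible $i$, a bijection between $S_i$ and the set of partitions counted by $p^k(n-i^k,\,j-1,\,i^k)$: from $\lambda\in S_i$ delete one copy of the part $i^k$, leaving a partition of $n-i^k$ into exactly $j-1$ positive $k$\textsuperscript{th} powers each $\leq i^k$ (bounded by $i^k$ precisely because $i^k$ was the maximum of $\lambda$); conversely, to a partition of $n-i^k$ into $j-1$ positive $k$\textsuperscript{th} powers each $\leq i^k$, adjoin a part $i^k$, obtaining a partition of $n$ into $j$ positive $k$\textsuperscript{th} powers whose maximum is exactly $i^k\leq\mu$, i.e. an element of $S_i$. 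These two maps are mutually inverse, so $|S_i|=p^k(n-i^k,j-1,i^k)$, and summing over $i$ with $i^k\leq\mu$ yields the theorem.

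I do not expect a serious obstacle here; the only points requiring care are bookkeeping of the boundary cases. One must check that when $j=1$ the ``remaining'' partition is the empty partition and the identity reduces to the statement that $n$ is itself a $k$\textsuperscript{th} power $\leq\mu$ in at most one way; that terms with $n-i^k<0$ vanish on both sides; and — the one genuinely load‑bearing observation — that adjoining a part $i^k$ to a partition all of whose parts are $\leq i^k$ forces the new maximum to be \emph{exactly} $i^k$, which is what makes the $S_i$ genuinely disjoint and turns the decomposition into an equality rather than an inequality. (The hypothesis $n\geq\mu$ mentioned before the statement is not actually needed for the recursion once these conventions are adopted, since oversized indices $i$ simply contribute zero.)
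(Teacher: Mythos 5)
Your proof is correct and follows essentially the same route as the paper's: classify the partitions counted by $p^k(n,j,\mu)$ by their largest part $i^k$, remove that part to land in $p^k(n-i^k,j-1,i^k)$, and sum over admissible $i$. Your version is somewhat more careful than the paper's (you spell out the inverse map and the degenerate conventions), but the underlying decomposition is identical.
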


\begin{proof}
    The largest possible part of each partition given by $p^k(n,j,\mu)$ is determined by the sequence $i^k$ for $i =1, 2,\ldots,\lfloor\sqrt[k]{\mu}\rfloor$. Let $S_k$ be the set of partitions with largest part $i^k.$ Then, the term $i^k$ can be removed from any partition in $S_k$ to obtain a partition of $n-i^k$ into $j-1$ parts. Since $i^k$ is the largest part in the partition before its removal, no parts in the resulting partition can be greater than $i^k.$ Thus, $|S_k| = p^k(n-i^k, j-1, i^k).$ Summing the number of partitions for each possible $k$ yields the identity.
\end{proof}

\begin{corollary}
    The generating function for $p^k(n,j,\mu)$ is given by \[ \sum_{\substack{i = 1\\i^k \le \mu}} p^k(n-i^k, j-1, i^k)x^ny^j = \prod_{\substack{i = 1\\i^k \le \mu}} \frac{1}{1-x^{i^k}y}.
\]
\end{corollary}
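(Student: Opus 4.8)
The plan is to recognize that the asserted identity is just the natural truncation of the generating function already established for $p^k(n,j)$, and to prove it either by the same direct coefficient-extraction argument used for that theorem or by a short induction built on Theorem~\ref{identity_3}.

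First I would reduce the left-hand side. By Theorem~\ref{identity_3}, for every $n$ and $j$ one has $\sum_{i:\,i^k\le\mu} p^k(n-i^k,\,j-1,\,i^k)=p^k(n,j,\mu)$, so the double series on the left of the corollary is literally $\sum_{n,j} p^k(n,j,\mu)\,x^n y^j$. Hence it suffices to prove
\[
\sum_{n=0}^{\infty}\sum_{j=0}^{\infty} p^k(n,j,\mu)\,x^n y^j \;=\; \prod_{\substack{i\ge 1\\ i^k\le\mu}}\frac{1}{1-x^{i^k}y}.
\]
Next I would expand the right-hand side. Since only finitely many indices satisfy $i^k\le\mu$, namely $i=1,\dots,\lfloor\mu^{1/k}\rfloor$, this is a finite product of geometric series $\frac{1}{1-x^{i^k}y}=\sum_{a_i\ge 0} x^{i^k a_i} y^{a_i}$. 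Multiplying these out, the coefficient of $x^n y^j$ counts the tuples $(a_i)_{i^k\le\mu}$ of nonnegative integers with $\sum_i i^k a_i = n$ and $\sum_i a_i = j$. Such a tuple is exactly the data of a partition of $n$ into exactly $j$ positive $k$\textsuperscript{th} powers, each part at most $\mu$ (take the part $i^k$ with multiplicity $a_i$), and conversely every such partition gives a unique tuple; this bijection is the definition of $p^k(n,j,\mu)$, so the coefficients agree. Everything here is a formal power series manipulation, so there are no convergence concerns.

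Alternatively, and perhaps more in the spirit of Theorem~\ref{identity_3}, I would induct on $r=\lfloor\mu^{1/k}\rfloor$, the number of admissible part sizes. For $r=1$ the only partition counted is $1+1+\cdots+1$, giving $\sum_j x^j y^j=\frac{1}{1-xy}=\frac{1}{1-x^{1^k}y}$. For the inductive step, conditioning a partition counted by $p^k(n,j,r^k)$ on the number $a$ of copies of its largest admissible part $r^k$ yields $p^k(n,j,r^k)=\sum_{a\ge 0} p^k(n-ar^k,\,j-a,\,(r-1)^k)$, which at the level of generating functions reads $F_{r}(x,y)=\bigl(\sum_{a\ge0}x^{r^k a}y^a\bigr)F_{r-1}(x,y)=\frac{1}{1-x^{r^k}y}\,F_{r-1}(x,y)$, and the induction hypothesis closes the argument.

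I do not expect a genuine obstacle. The only points needing care are the bookkeeping that turns the double sum on the left into $\sum p^k(n,j,\mu)x^n y^j$ via Theorem~\ref{identity_3}, the observation that $p^k(n,j,\mu)=p^k(n,j,r^k)$ whenever $r^k\le\mu<(r+1)^k$ (so that restricting to indices with $i^k\le\mu$ loses nothing), and the harmless edge cases $n=0$ or $j=0$ corresponding to the empty partition of $0$, all of which are already consistent with the conventions used in the preceding theorems.
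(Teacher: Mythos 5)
Your proposal is correct and follows essentially the same route as the paper, whose entire proof is the one-line observation that the identity is Theorem 2.2's generating function truncated at $\mu$; your coefficient-extraction argument for the finite product is just that observation carried out in detail, and your use of Theorem \ref{identity_3} to identify the left-hand side with $\sum_{n,j}p^k(n,j,\mu)x^ny^j$ is a bookkeeping step the paper leaves implicit. The alternative induction on the number of admissible part sizes is a fine variant but not needed.
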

\begin{proof}
    This follows naturally from Theorem 2.2, with the generating function truncated at $\mu$ because no larger parts are allowed.
\end{proof}
Starting the product for the generating function of $p^k(n,j)$ (or $p^k(n,j,\mu)$) at $i = 0$ instead of $i = 1$ generates the sequence of partitions of $n$ into $j$ \textit{non-negative} $k$\textsuperscript{th} powers (with maximum part $\mu$). In other words, starting at $i = 0$ allows $0^k$ as a summand. This is also equal to the sequence of partitions of $n$ into $\textit{at most}$ $j$ positive $k$-th powers.

\section{Positive Squares}
Consider first when $k=2$, for which values of $m$ does there exist a positive integer that can be written as the sum of $j$ positive squares in exactly $m$ ways? For a fixed $m$, as $j$ increases one of two things will happen: increasing the value from $j$ to $j+1$ is accomplished by adding $1^2$ to each representation of $\text{Taxicab}(2,j,m)$, or no such number exists for all larger values of $j$; these are the only two possible cases.

\subsection*{Case 1}
Consider the sequence given by $\text{Taxicab}(2,j,2)$, the smallest number that can be expressed as the sum of $j$ positive squares in exactly 2 ways as $j$ increases. The smallest integer that can be expressed as the sum of two squares in exactly two ways is $50=1^2+7^2=5^2+5^2=\text{Taxicab}(2,2,2)$. The sequence begins:
\begin{table}[h!]
\begin{adjustbox}{width=1\textwidth}
\begin{tabular}{c|c|c|c|c|c|c|c|c|c|c}
&$j=1$&$j=2$&$j=3$&$j=4$&$j=5$&$j=6$&$j=7$&$j=8$&$j=9$&$j=10$\\
\hline
$\text{Taxicab}(2,j,2)$&2&50&27&31&20&21&22&23&24&25
\end{tabular}
\end{adjustbox}
\end{table}\\
Notice the pattern emerging at $j=5$: $\text{Taxicab}(2,j+1,2)$ is obtained by adding $1^2$ to every representation of $\text{Taxicab}(2,j,2)$. 
\begin{align}
\nonumber 20&=2^2+2^2+2^2+2^2+2^2\\
\nonumber &=4^2+1^1+1^2+1^2+1^2\\
\nonumber \\
\nonumber 21&=2^2+2^2+2^2+2^2+2^2\mathbf{\textcolor{blue}{+1^2}}\\
\nonumber &=4^2+1^1+1^2+1^2+1^2\mathbf{\textcolor{blue}{+1^2}}\\
\nonumber \\
\nonumber 22&=2^2+2^2+2^2+2^2+2^2+1^2\mathbf{\textcolor{blue}{+1^2}}\\
\nonumber &=4^2+1^1+1^2+1^2+1^2+1^2\mathbf{\textcolor{blue}{+1^2}}\\
\nonumber \vdots
\end{align}
This is one possible end behavior for a fixed $m$ as $j$ increases. 
\subsection*{Case 2}
The other possible end behavior for a fixed $m$ is that there exists a $J$ such that for all $j\geq J$, there does not exist a positive integer that satisfies $\text{Taxicab}(2,j,m)$. The smallest example occurs when $m=3$ and $j=10$; there does not exist an integer that can be expressed as the sum of $10$ or more squares in exactly $3$ ways. Consider the first few cases for $\text{Taxicab}(2,j,3)$.
\begin{table}[h!]
\begin{adjustbox}{width=1\textwidth}
\centering
\begin{tabular}{c|c|c|c|c|c|c|c|c|c|c}
&$j=1$&$j=2$&$j=3$&$j=4$&$j=5$&$j=6$&$j=7$&$j=8$&$j=9$&$j=10$\\
\hline
$\text{Taxicab}(2,j,3)$&3&325&54&28&29&30&31&35&49&${\emptyset}$
\end{tabular}
\end{adjustbox}
\end{table}
\\
But why not simply add $1^2$ to each representation of $49$ and obtain $T(2,10,3)=50$? In fact, $50$ has \textit{four} representations as the sum of $10$ squares:
\begin{align}
\nonumber \text{Taxicab}(2,9,3)=49&=5^2+3^2+3^2+1^2 +1^2+1^2+1^2+1^2+1^2\\
\nonumber &=3^2+3^2+3^2+3^2+3^2+1^2+1^2+1^2+1^2\\
\nonumber &=4^2+3^2+3^2+2^2+2^2+2^2+1^2+1^2+1^2\\
\nonumber \\
\nonumber 50&=5^2+3^2+3^2+1^2 +1^2+1^2+1^2+1^2+1^2\mathbf{\color{blue}{+1^2}}\\
\nonumber &=3^2+3^2+3^2+3^2+3^2+1^2+1^2+1^2+1^2\mathbf{\color{blue}{+1^2}}\\
\nonumber &=4^2+3^2+3^2+2^2+2^2+2^2+1^2+1^2+1^2\mathbf{\color{blue}{+1^2}}\\
\nonumber &\mathbf{\color{red}{\ =3^2+3^2+2^2+2^2+2^2+2^2+2^2+2^2+2^2+2^2}}
\end{align}
In other words, adding $1^2$ to each representation of $\text{Taxicab}(2,9,3)$ does not yield $\text{Taxicab}(2,10,3)$. This is the only other possible end behavior of $\text{Taxicab}(2,j,m)$ for a fixed $m$ as $j$ gets large.

\section{Numerical Data for $\text{Taxicab}(2,j,m)$}
Figure \ref{bitmap} is a bit map of $\text{Taxicab}(2,j,m)$ for $2 \leq j \leq 75$ running from left to right along the horizontal axis and $1 \leq m \leq 500$ running top to bottom along the vertical axis. This map shows a white pixel if $\text{Taxicab}(2,j,m)$ exists and a black pixel if not. 
\begin{figure}[h!]
\includegraphics[scale=.45]{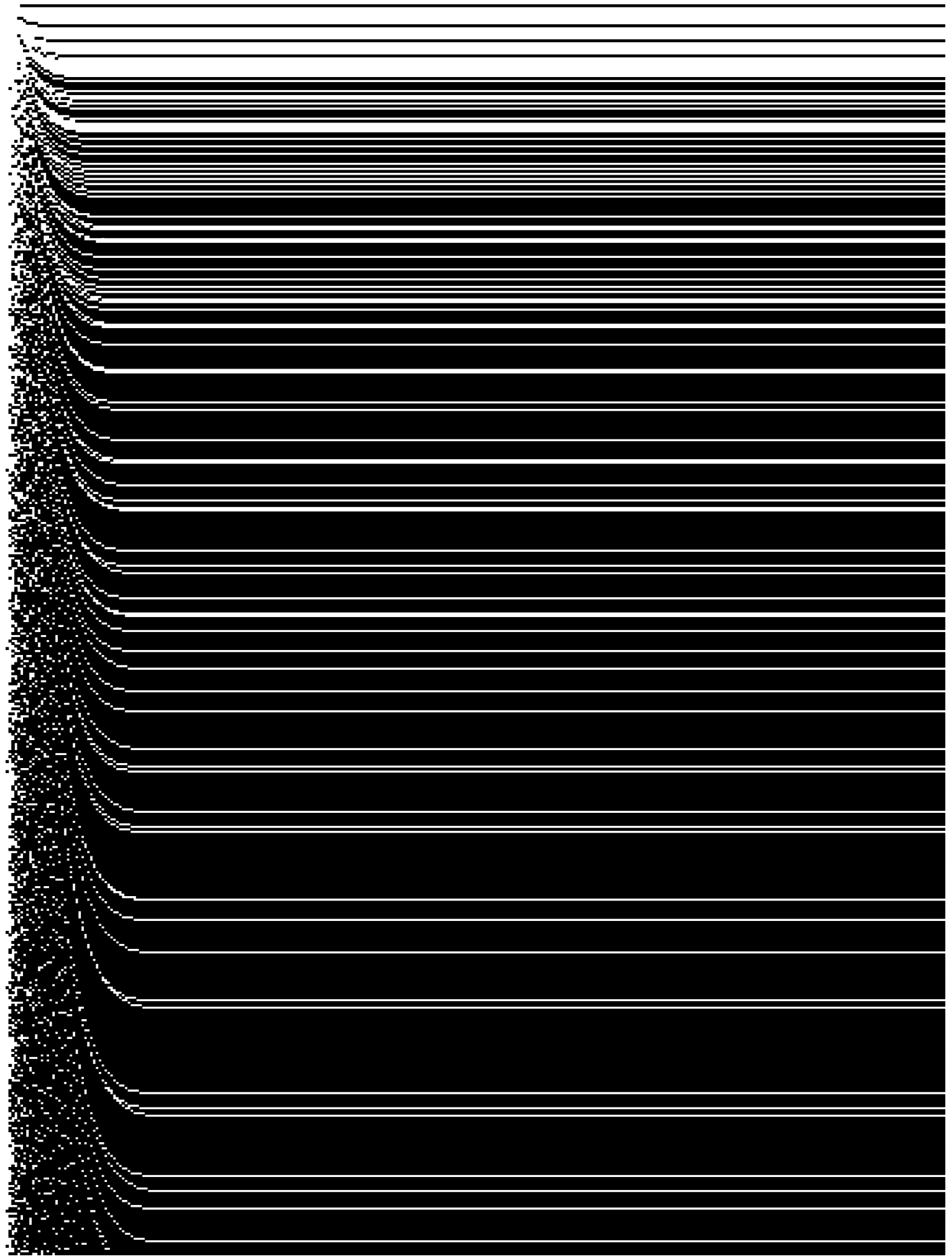}
\caption{$\text{Taxicab}(2,j,m)$ for $j\leq75$ and $m\leq500$.}
\label{bitmap}
\end{figure}

The end behavior of $\text{Taxicab}(2,j,m)$ splits the positive integers into two sets: numbers for which there always exists a solution for $\text{Taxicab}(2,j,m)$ for sufficiently large $j$, and the complement, numbers for which there does not exist a solution for $\text{Taxicab}(2,j,m)$ for sufficiently large $j$. Denote by $\{m_i^k\}$ the sequence of $m$ such that $\text{Taxicab}(k,j,m)$ has a solution for a fixed $k$ and for sufficiently large $j$, and denote by $\{\mathbb{N}\setminus m_i^k\}$ the sequence of $m$ such that no solution to $\text{Taxicab}(k,j,m)$ exists for sufficiently large $j$. Then $m_i^2$ and its complement sequence $\mathbb{N}\setminus m_i^2$ begin:
\begin{align*}  m_i^2&=1,2,4,5,6,7,8,9,10,12,16,14,15,16,18,19,20,21,22,24,\ldots\\
\mathbb{N}\setminus m_i^2&=3,11,17,23,32,34,35,36,38,41,43,45,46,47,49,54,55,57,\ldots
\end{align*}
\begin{conjecture}
    The sequences $\{m_i^2\}$ and $\{\mathbb{N}\setminus m_i^2\}$ each contain infinitely many elements.
\end{conjecture}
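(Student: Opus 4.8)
The plan is to identify $\{m_i^2\}$ \emph{exactly} with the range of an explicit restricted partition function. Let $q(e)$ be the number of partitions of $e$ into parts from $\mathcal S:=\{i^2-1:i\ge 2\}=\{3,8,15,24,\dots\}$. The link is a bijection valid when $e\le 3j$: deleting every $1^2$ from a partition of $n=j+e$ into exactly $j$ positive squares leaves the squares $a^2>1$, whose excesses $a^2-1\in\mathcal S$ sum to $e$, and conversely any partition of $e$ into $\mathcal S$-parts uses at most $e/3\le j$ parts and may be padded back with $1^2$'s. (This is the regime $n<2^2j$ of Theorem~\ref{identity_1}, where $p^2(n,j)=p^2(n-1,j-1)$ iterates.) Hence $p^2(j+e,j)=q(e)$ for $e\le 3j$, and the target equivalence is $m\in\{m_i^2\}\iff m\in\operatorname{range}(q)$.

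Granting this, the conjecture reduces to two facts about $q$. \emph{Infinitude of $\operatorname{range}(q)$}: using only the parts $3$ and $8$, $q(e)\ge\#\{(a,b)\in\mathbb Z_{\ge0}^2:3a+8b=e\}$, which tends to infinity (by Popoviciu's formula it is $\tfrac{e}{24}+O(1)$); an $\mathbb N$-valued unbounded function has infinite range. \emph{Co-infinitude of $\operatorname{range}(q)$}: using the parts $3,8,15$, $q(e)\ge\#\{(a,b,c)\in\mathbb Z_{\ge0}^3:3a+8b+15c=e\}$, which is a quasi-polynomial in $e$ of degree $2$ with positive leading coefficient $\tfrac1{720}$, hence $\ge c_0e^2$ for large $e$. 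Thus $q(e)\le N$ forces $e=O(\sqrt N)$, so $|\operatorname{range}(q)\cap[1,N]|=O(\sqrt N)$ and $[1,N]\setminus\operatorname{range}(q)$ is nonempty and grows with $N$. (Sanity check: the smallest positive integer missed by $q$ is $3$, consistent with Theorem~\ref{10squares3ways} and with $\mathbb N\setminus m_i^2=3,11,17,23,\dots$.)

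It remains to prove the equivalence. For $m\in\operatorname{range}(q)\Rightarrow m\in\{m_i^2\}$: let $e^\ast$ be minimal with $q(e^\ast)=m$; for $j\ge e^\ast$ the identity above gives $p^2(j+e,j)=q(e)$ for all $0\le e\le e^\ast$, so $p^2(n,j)\ne m$ for $n<j+e^\ast$ while $p^2(j+e^\ast,j)=m$, i.e.\ $\text{Taxicab}(2,j,m)=j+e^\ast$ for all $j\ge e^\ast$. Conversely, assume $m>1$ and $\text{Taxicab}(2,j,m)$ exists for all large $j$, and pick such a $j_0>6$. Theorem~\ref{UpperBound} then puts us in its ``else'' branch, so $\text{Taxicab}(2,j_0,m)<N^2(m,j_0)$ and $\text{Taxicab}(2,j_0+1,m)=\text{Taxicab}(2,j_0,m)+1$; since $N^2(m,j)$ is strictly increasing in $j$, the inequality persists at $j_0+1$, the ``else'' branch applies again, and by induction $\text{Taxicab}(2,j,m)=j+c$ for all $j\ge j_0$, where $c:=\text{Taxicab}(2,j_0,m)-j_0\ge0$. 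Then $p^2(j+c,j)=m$ for all $j\ge j_0$; choosing $j$ large enough that $c\le 3j$ yields $q(c)=m$. (The value $m=1$ is trivial: $\text{Taxicab}(2,j,1)=j$ and $q(0)=1$.)

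The step I expect to be delicate is this last induction, since it uses Theorem~\ref{UpperBound} precisely in the form that propagates the successor relation forward in $j$: one must verify the monotonicity of $N^2(m,j)$, confirm that ``a solution exists'' is equivalent to the theorem's ``else'' case, and dispose of the boundary situations ($j\le 6$, $m=1$), so that the Case~1/Case~2 alternative really is a clean dichotomy with no third possibility. Once that is secured, the two coin-counting estimates for $q$ finish the argument with no further difficulty.
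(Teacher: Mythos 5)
First, a point of order: the paper does not prove this statement anywhere --- it appears only as a conjecture supported by the numerical data in Figure~\ref{m_i_squares} --- so your proposal must stand on its own rather than be compared to an existing argument. Its core reduction is both new relative to the paper and, in part, successful. The bijection $p^2(j+e,j)=q(e)$ for $0\le e\le 3j$ (strip the $1^2$'s and record the excesses $i^2-1\in\mathcal S$) is correct, the easy implication is correct (if $e^\ast$ is minimal with $q(e^\ast)=m$ then $\text{Taxicab}(2,j,m)=j+e^\ast$ for all $j\ge e^\ast$, so every positive value of $q$ lies in $\{m_i^2\}$), and the denumerant estimates are correct ($\#\{3a+8b=e\}=e/24+O(1)$ and $\#\{3a+8b+15c=e\}=e^2/720+O(e)$). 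Since $q$ is therefore unbounded and integer-valued, its range is infinite, and the infinitude of $\{m_i^2\}$ is genuinely proved by this half of your argument alone.

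The second half has a real gap, and it sits exactly where you flagged it. To get co-infinitude you need the converse implication (existence of $\text{Taxicab}(2,j,m)$ for all large $j$ forces $m\in\operatorname{range}(q)$), and you derive it by applying the ``else'' branch of Theorem~\ref{UpperBound} at each $j_0>6$ to propagate $\text{Taxicab}(2,j+1,m)=\text{Taxicab}(2,j,m)+1$. That reading of the theorem is refuted by the paper's own data: $\text{Taxicab}(2,7,3)=31$ and $\text{Taxicab}(2,8,3)=35\ne 32$, and worse, $\text{Taxicab}(2,9,3)=49$ exists while $\text{Taxicab}(2,10,3)$ does not --- had your induction step been valid at $(m,j_0)=(3,9)$ it would yield $q(40)=3$, contradicting Theorem~\ref{10squares3ways} and your own sanity check. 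The paper implicitly concedes this by introducing the $m$-dependent threshold $J^2(m)$ (only empirically fitted) past which the successor relation holds; the existence of such a threshold is precisely what your induction requires and is not established anywhere --- the cited Dinitz--Games--Roth lemma concerns ``at least $m$ ways'' and does not control the ``exactly $m$'' count. Concretely, the unguarded window is $4j<n<N^2(m,j)$: there the bijection fails (representations need not contain a $1^2$, so $p^2(j+e,j)$ counts only partitions of $e$ into \emph{at most} $j$ parts from $\mathcal S$ and can drop below $q(e)$), while the pigeonhole argument gives only $p^2(n,j)\gtrsim\sqrt{n}/j$, which does not exceed $m$ until $n\approx(mj)^2$. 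Closing the gap requires a lower bound, uniform over $e\in\bigl(3j,\,(mj+j+14)^2-j\bigr)$ for all $j$ large in terms of $m$, showing that the number of partitions of $e$ into at most $j$ parts from $\mathcal S$ is never exactly $m$; that is the actual content of the second half of the conjecture, and the proposal does not supply it.
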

\begin{question}
    For the sequence $\{m_i^2\}$, is it possible to find an explicit or an asymptotic formula?
\end{question}
\noindent It appears that $\{m_i^2\}$ is growing at roughly exponential rate, indicating that values of $m$ for which $\text{Taxicab}(2,j,m)$ exists become increasingly rare as $m$ gets large. In other words, it appears that `most' values of $m$ do not allow a solution for $\text{Taxicab}(2,j,m)$. Utilizing Mathematica, a reasonable approximation for $\{m_i^2\}$ appears to be $c(x)\approx17.5603\exp(-0.0327825x)$. Figure \ref{m_i_squares} maps $\{m_i^2\}$ and $c(x)$ for $m \leq 500$.
\begin{figure}[h!]
\includegraphics[scale=.4]{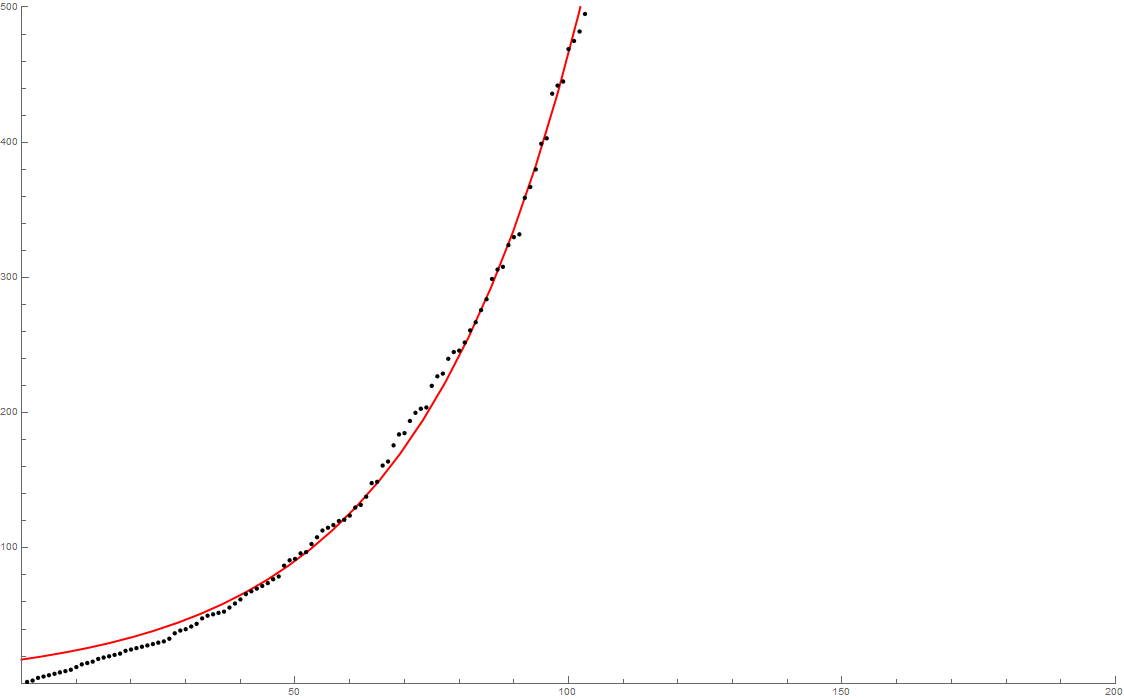}
\caption{$c(x)$ and $\{m_i^2\}$ for $m\leq500$.}
\label{m_i_squares}
\end{figure}

\section{Proof of Theorems \ref{10squares3ways} and \ref{UpperBound}}

To prove that there is no positive integer $n$ that can be written as the sum of 10 squares in exactly 3 ways, consider one of the 3 representations of $n$:
\begin{align*}    n&=\underbrace{x_1}_{a_1}+\underbrace{x_2+x_3+x_4+x_5+x_6+x_7+x_8+x_9+x_{10}}_{b_1}
\end{align*}

\noindent Let $n=a+b$ where $a = x_1^2$ and $b = n-a = x_2^2 + x_3^2 + x_4^2 + x_5^2 + x_6^2 + x_7^2 + x_8^2 + x_9^2 + x_{10}^2$. Then, whenever there are $3\binom{10}{1}+1 = 31$ ways to choose $a$ such that $a$ is a perfect square and $b$ is the sum of $9$ positive squares, there must be at least $4$ ways to express $n$ as the sum of $10$ positive squares. In 1911, Dubouis \cite{Dubouis} (and independently in 1933, Pall \cite{Pall}) determined exactly which positive integers are not the sum of exactly $9$ positive squares. Grosswald \cite{Grosswald} summarizes the result elegantly: let $\mathbf{B}=\{1, 2, 4, 5, 7, 10, 13\}$.
\begin{lemma}[Dubouis \& Pall]\label{Dubouis_2}
    Every positive integer $n$ can be expressed as the sum of $j \geq 6$ positive squares provided $n>j-1$ and $n \neq j+b$ for $\beta \in \mathbf{B}$. For $j=5$, the result holds for $\mathbf{B} \cup \{28\}$.
\end{lemma}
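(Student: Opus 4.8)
The plan is to reduce to the case $j=5$ by an induction in which the exceptional set merely shifts, and then to settle that base case with the classical three- and four-square theorems. First I would record the elementary reformulation: the smallest sum of $j$ positive squares is $j\cdot 1^{2}=j$, so $n\ge j$ is necessary, and writing $n=j+b$, a representation $n=x_{1}^{2}+\dots+x_{j}^{2}$ with every $x_{i}\ge1$ is precisely a decomposition $b=\sum_{i=1}^{j}(x_{i}^{2}-1)$ --- that is, $b$ written as a sum of at most $j$ terms from $\{t^{2}-1:t\ge2\}=\{3,8,15,24,\dots\}$. Since $15,24,\dots$ all lie in the numerical semigroup $\langle 3,8\rangle$, the set of $b$ representable with unboundedly many terms is exactly $\langle 3,8\rangle$, whose gap set is $\{1,2,4,5,7,10,13\}=\mathbf{B}$ (its Frobenius number being $3\cdot8-3-8=13$). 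This yields the necessity of the exceptions $n=j+\beta$, $\beta\in\mathbf{B}$, for every $j\ge5$; and for $j=5$ one further exception appears, because $28\in\langle 3,8\rangle$ but its only representation $28=4\cdot3+2\cdot8$ uses $6>5$ summands, so $n=33$ must be excluded when $j=5$, whereas for $j\ge6$ this obstruction disappears since $6\le j$.

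It then remains to prove the converse: every $n\ge j$ with $n-j\notin\mathbf{B}$, and with $n\ne 33$ when $j=5$, is a sum of exactly $j$ positive squares. I would do this by induction on $j$ with base case $j=5$. For the base case, invoke the classical description --- deducible from the Gauss--Legendre three-square theorem --- of the integers that are \emph{not} sums of four positive squares, namely $1,3,5,9,11,17,29,41$ together with $2\cdot4^{k},6\cdot4^{k},14\cdot4^{k}$ for $k\ge0$. Because $n-1$ and $n-4$ differ by $3$, one of them is odd; all odd non-representable numbers in the list are $\le 41$ and the three infinite families consist of even numbers, so that odd member is a sum of four positive squares as soon as $n>45$, and appending $1^{2}$ or $2^{2}$ accordingly exhibits $n$ as a sum of exactly five positive squares. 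A finite check then handles $5\le n\le45$ and confirms that the $j=5$ exceptions are exactly $n\le4$ and $n\in\{6,7,9,10,12,15,18,33\}$.

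For the inductive step, suppose the claim holds for some $j\ge5$ and take $n$ with $n\ge j+1$ and $n-(j+1)\notin\mathbf{B}$. The one delicate point is the transition $j=5\to j=6$ at $n=34$: here $n-1=33$ is the extra $j=5$ exception, so the ``peel off a $1^{2}$'' move is unavailable, and I would dispatch this case directly via $34=3^{2}+3^{2}+2^{2}+2^{2}+2^{2}+2^{2}$ (equivalently, $34-4=30$ is a sum of five positive squares, then append $2^{2}$). In every other case $n-1\ge j$, $(n-1)-j=n-(j+1)\notin\mathbf{B}$, and $n-1\ne 33$ when $j=5$; hence by the inductive hypothesis $n-1$ is a sum of exactly $j$ positive squares, and adjoining $1^{2}$ completes the representation of $n$. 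This closes the induction, and together with the necessity argument above it pins down the exceptional set for every $j\ge5$, which is the assertion of the lemma.

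The main obstacle is the base case $j=5$ --- concretely, having in hand (by citation or by re-deriving from the three-square theorem) the exact list of integers that are not sums of four positive squares. Granting that list, the parity argument and the finite verification are routine, and the induction on $j$ is merely bookkeeping about how the set $\{j+\beta:\beta\in\mathbf{B}\}$ transforms under $n\mapsto n-1$, $j\mapsto j-1$.
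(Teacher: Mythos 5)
The paper does not prove this lemma at all --- it is imported verbatim from the literature (Dubouis 1911, Pall 1933, in Grosswald's formulation) and used as a black box, so there is no in-paper argument to measure yours against. Your proof is correct, and it supplies something the citation leaves opaque: a structural explanation of the exceptional set. The reformulation $n=j+b$ with $b=\sum_{i}(x_i^2-1)$, together with the observation that the additive monoid generated by $\{t^2-1:t\ge 2\}=\{3,8,15,\dots\}$ is exactly the numerical semigroup $\langle 3,8\rangle$ (every $t^2-1\ge 15$ exceeds the Frobenius number $13$), identifies $\mathbf{B}$ as precisely the gap set of $\langle 3,8\rangle$ and explains why $28$, whose unique representation $4\cdot 3+2\cdot 8$ needs six summands, is an exception for $j=5$ only. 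The induction is sound: the lone failure of the ``peel off a $1^2$'' step is $n-1=33$ at $j=5\to 6$, which you handle via $34=3^2+3^2+2^2+2^2+2^2+2^2$, and the parity argument in the base case is valid ($n-1$ and $n-4$ differ by $3$, the odd one exceeds $41$ once $n>45$, and every odd integer that is not a sum of four positive squares is at most $41$ since the three infinite families $2\cdot 4^{k},6\cdot 4^{k},14\cdot 4^{k}$ are even). Two items remain asserted rather than executed: the finite verification for $5\le n\le 45$, and the classification of non-sums of four positive squares, which is exactly the paper's Lemma \ref{Dubouis_1} and is itself only cited there; both are routine, but a referee would want them written out or referenced precisely before accepting this as a complete proof rather than a correct and well-organized sketch.
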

Lemma \ref{Dubouis_2} enumerates which positive integers cannot be expressed as the sum of $9$ squares: $1, 2, 3, 4, 5, 6, 7, 8$, and  $10, 11, 13, 14, 16, 19, 22$. Let $a \geq 961$; there are at least $\lfloor\sqrt{961}\rfloor = 31$ positive squares less than $a$. For all $n \geq 961 + 22 +1 = 984$, note that $n = a+b$ where $a$ is a perfect square and $b$ is a sum of $9$ squares, and there are at least $4$ ways to express $n$ as the sum of $10$ positive squares. A computer search verifies that no $n < 961$ can be expressed as the sum of $10$ positive squares in exactly $3$ ways.

In fact, $p^2(n,10) \geq 4$ for all $n \geq 48$. Note that $1^2$ can be added $c$ times to each representation of $n$, hence $p^2(n,j) \leq p^2(n+c,j+c)$ for all $j \geq 10$ and $n \geq 48$. In particular, for all $n$ and $j$, $p^2(n,j) \leq p^2(n+1,j+1)$. Similarly, if every representation of $n$ as the sum of $j$ squares in exactly $m$ ways has $1^2$ as a term, then $p^2(n,j)=p^2(n-1,j-1)$ by removing the $1^2$ term from each representation. The minimum that a representation with no $1^2$ term can be is $4j$ since the smallest such representation is simply $2^2$, added exactly $j$ times: 
\[
n = \underbrace{2^2+2^2+2^2+\ldots+2^2}_{j\ \text{times}}=4j
\]
Thus, for $n < 4j$, the identity $p^2(n,j) = p^2(n-1,j-1)$ holds. The same process outlined above for can be repeated for $j=11$ and $j=12$. A computer search up to $(11\cdot3+11+14)^2=3364$ ensures that $\text{Taxicab}(2,11,3)$ does not exist and a search up to $(12\cdot3+12+14)^2=3844$ ensures that $\text{Taxicab}(2,12,3)$ does not exist. In other words, there is no $n$ such that $p^2(n,11)=3$ nor $p^2(n,12)=3$. Let $j>12$ and suppose $n<38+j<4j$, then by Theorem \ref{identity_1}, 
\[
p^2(n,j)=p^2(n-1,j-1)=\ldots=p^2(n-j+12,12)\neq3.
\]
Otherwise, if $n \geq 38+j$, then $n-j+12 \geq 50$, and 
\[
p^2(n,j) \geq p^2(n-1,j-1) \geq \ldots \geq p^2(n-j+12,12) \geq 4. 
\]
This completes the proof of Theorem \ref{10squares3ways}. \qed

Proof of Theorem \ref{10squares3ways} relied on knowing which numbers are not a sum of $9$ (or, in general, $j-1)$ squares. By Lemma \ref{Dubouis_2}, the largest number that cannot be written as a sum of $j-1$ squares is $j-1+13$. Then,  there are $\binom{j}{1}=j$ ways to decompose a sum of $j$ squares into a square and a sum of $j-1$ squares. If there are at least $mj+1$ representations of $n$ as a perfect square plus a number that is the sum of $j-1$ squares, then there are at least $m+1$ representations of $n$ as a sum of $j$ squares by the Pigeonhole principle. This gives an upper bound on how high a computer needs to check to determine for a given $m$ if $\text{Taxicab}(2,j,m)$ exists for $j \ge 7$. Denote this upper bound by $N^2(m,j)$:
\[
N^2(m,j) = (mj+1+(j+13))^2 = (mj+j+14)^2
\]

\noindent In other words, to determine if there exists an $n$ that is the sum of $j$ squares in exactly $m$ ways, a computer need only search up to $N_m^j$. To complete the proof, consider the result of Dinitz, Games, \& Roth.
\begin{lemma}[Dinitz, Games, \& Roth]\label{lemma_D,G,R}
    For all $j,m \geq 1$, there exists a smallest positive integer $s_0$ such that $T(s_0+\lambda,j,m) = T(s_0,j,m)+\lambda$ for every $\lambda \geq 0$. 
\end{lemma}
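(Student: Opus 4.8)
The plan is to deduce the lemma from an elementary fact about monotone integer sequences. Hold the exponent and the target number of representations $m$ fixed and let the number of summands vary; write $a_n := T(k,n,m)$ for the least positive integer that is a sum of $n$ positive $k$th powers in at least $m$ ways. By the existence result of Dinitz, Games, and Roth \cite{Dinitz} recalled in the introduction, $a_n$ is finite for every $n$ in the (eventual) range under consideration, so the sequence $(a_n)$ is genuinely defined. The assertion to be proved is precisely that $(a_n)$ is eventually an arithmetic progression with common difference $1$, and $s_0$ is the threshold index at which this begins.

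First I would record the ``append a $1^k$'' inequality $a_{n+1} \le a_n + 1$. If $R_1,\dots,R_m$ are distinct size-$n$ multisets of positive $k$th powers each summing to $a_n$, then $R_1\cup\{1^k\},\dots,R_m\cup\{1^k\}$ are distinct size-$(n+1)$ multisets of positive $k$th powers each summing to $a_n+1$ --- distinctness survives because deleting one part equal to $1^k$ recovers the original multiset --- so $p^k(a_n+1,n+1)\ge m$ and hence $a_{n+1}\le a_n+1$. This is the ``at least $m$'' counterpart of Theorem \ref{identity_1}. I would also record the trivial bound $a_n \ge n$, since any sum of $n$ positive integers is at least $n$.

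Now set $b_n := a_n - n$. The second bound gives $b_n \ge 0$, and the first gives $b_{n+1} = a_{n+1}-(n+1) \le (a_n+1)-(n+1) = b_n$, so $(b_n)$ is a non-increasing sequence of non-negative integers. Any such sequence is eventually constant and attains its minimum on an up-set $[s_0,\infty)$; for $\lambda \ge 0$ this yields
\[
T(k,s_0+\lambda,m) = (s_0+\lambda) + b_{s_0+\lambda} = (s_0+\lambda) + b_{s_0} = (s_0 + b_{s_0}) + \lambda = T(k,s_0,m) + \lambda ,
\]
which is exactly the lemma, with $s_0$ the least index at which $(b_n)$ stabilises.

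There is no substantial obstacle here; the only non-elementary ingredient is the Dinitz--Games--Roth existence theorem, which is what makes each $a_n$ (hence each $b_n$) a bona fide non-negative integer to begin with --- without that input the monotone-descent step has nothing to act on. It is worth stressing why the same reasoning cannot be applied to the ``exactly $m$'' function $\text{Taxicab}(k,n,m)$: the lower bound $a_n \ge n$ still holds, but $\text{Taxicab}(k,n,m)$ need not be defined for every $n$, so there is no sequence to descend along --- and indeed the main theorems of this paper exhibit parameters $(k,m)$ for which it is eventually undefined.
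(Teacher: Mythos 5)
Your argument is correct and complete. The paper itself gives no proof of this lemma --- it is imported verbatim from Dinitz, Games, and Roth \cite{Dinitz} and used as a black box in the proof of Theorem \ref{UpperBound} --- so there is nothing internal to compare against; what you have written is a valid self-contained justification, and it is the natural one (and, as far as I can tell, essentially the argument in the cited source). The two ingredients are exactly right: the ``append $1^k$'' map is injective on multisets of representations, giving $T(n+1)\le T(n)+1$ for the \emph{at least $m$} counting function, and the trivial bound $T(n)\ge n$ makes $T(n)-n$ a non-increasing sequence of non-negative integers, hence eventually constant; the stabilisation index is the $s_0$ of the lemma. Your closing remark about why the descent fails for the \emph{exactly $m$} function $\text{Taxicab}(k,n,m)$ is also the right thing to flag, since that distinction is the whole point of the paper. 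One small caveat worth making explicit: the lemma as printed writes $T(s_0+\lambda,j,m)$ with the incremented parameter in the first slot, whereas the paper's earlier definition puts the exponent first and the number of summands second; your reading (fix the exponent and $m$, vary the number of summands) is the only one consistent with how the lemma is applied (``$\text{Taxicab}(2,j+1,m)=\text{Taxicab}(2,j,m)+1$''), so you resolved the notational inconsistency correctly, but a referee would want that mismatch acknowledged rather than silently repaired.
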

By Lemma \ref{lemma_D,G,R}, for $n > N_j^k$, if $\text{Taxicab}(2,j,m)$ exists, then $\text{Taxicab}(2,j+1,m) = \text{Taxicab}(2,j,m)+1$; else, no such integer exists. This completes the proof of Theorem \ref{UpperBound}. \qed

A reliable bound is obtained from Theorem \ref{UpperBound}, but a tighter bound exists. Visible in Figure 1 is a curved boundary separating an unorganized region of small values of $j$ and a regularly organized region of large values of $j$. Theorem \ref{UpperBound} holds for all $n > N_j^k$, but frequently it also holds for many smaller values of $n$. Let $J^2(m)$ map the smallest $(m,J)$ such that Theorem \ref{UpperBound} holds for all $j \geq J$. Utilizing Mathematica, a close fit for $J^2(m)$ appears to be $g_2(x)=45.06\sqrt[8]{x}-44.7873$. Figure \ref{boundary_squares} plots $J(m)$ and $b(x)$ for $j \leq 75$ and $m \leq 500$.

\begin{figure}[h!]
\includegraphics[scale=.25]{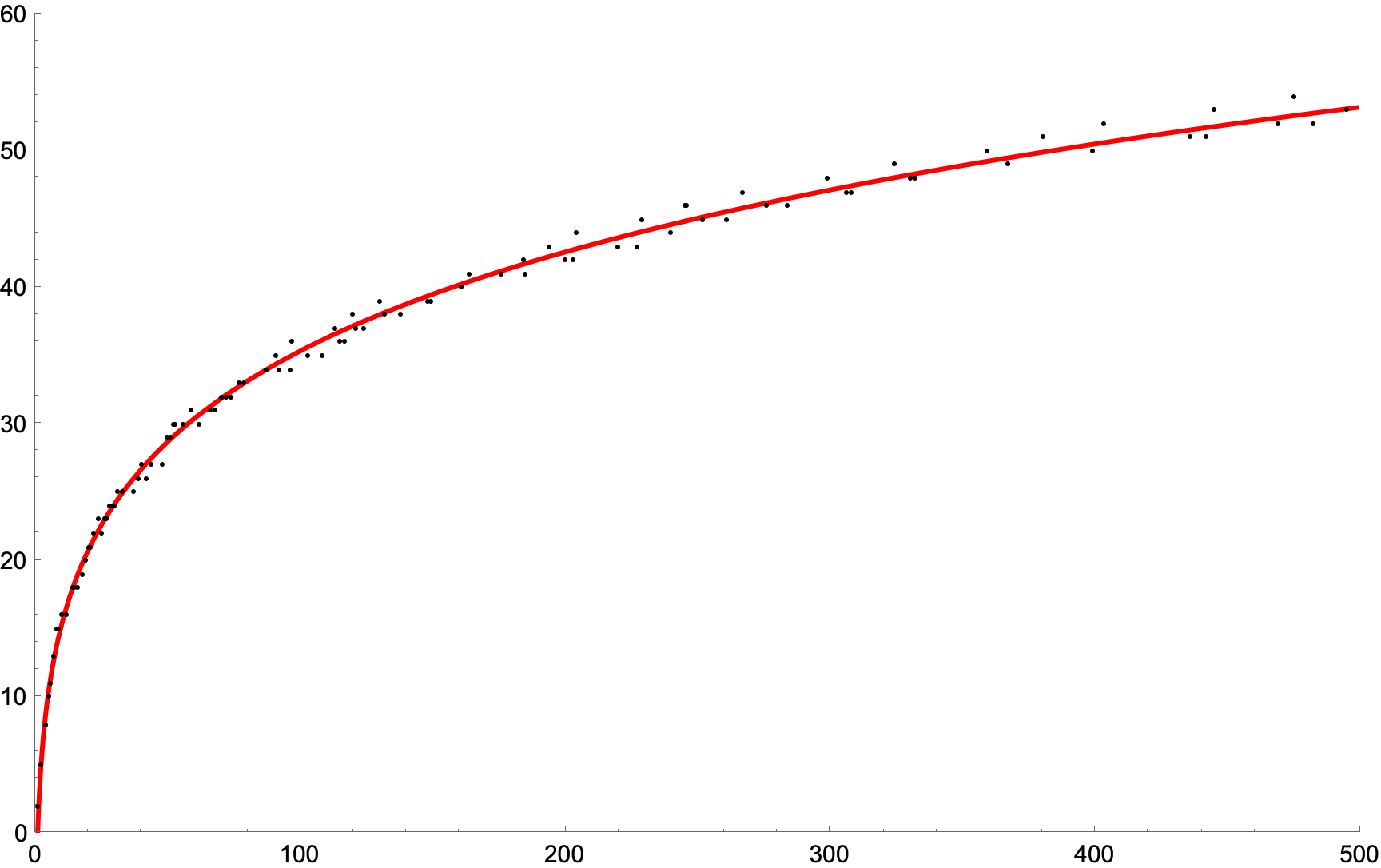}
\caption{$J^2(m)$ (black) and $g_2(x)$ (red)}
\label{boundary_squares}
\end{figure}

\section{Proofs of Conjectures from the OEIS}

\subsection{Proof of Theorem \ref{Taxicab(2,5,188)}}
Lagrange \cite{Lagrange} famously proved in 1770 that every number can be expressed as the sum of four squares, but allowed $0$ as a term in the representation - not every integer has a representation as the sum of four \textit{positive} squares. The sequence of positive integers that cannot be expressed as the sum of four squares begins:
\[
A000534(n) =  1, 2, 3, 5, 6, 8, 9, 11, 14, 17, 24, 29, 32, 41, 56, 96, 128, 224, 384, \ldots
\]
The complete list of such positive integers was conjectured in 1638 by Descartes \cite{Descartes} and proven in 1911 by Dubouis \cite{Dubouis}. Recall that $\mathbf{B}=\{1, 2, 4, 5, 7, 10, 13\}$.
\begin{lemma}[Dubouis]\label{Dubouis_1}
     Every positive integer $n$ can be expressed as the sum of $4$ positive squares provided $n > 3$ and $n \neq 4+b$ for $\beta \in \mathbf{B}\cup\{25,37\}$ and $n \neq 2*4^\alpha, 6*4^\alpha, 14*4^\alpha$ for $\alpha=0,1,2,\ldots$.
\end{lemma}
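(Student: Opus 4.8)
The plan is to pin down the exact set of positive integers that fail to be a sum of four positive squares by combining Lagrange's four-square theorem with a descent under multiplication by $4$. Write $E$ for the exceptional set described in the statement: the small values $\{1,2,3\}$, the translates $4+b$ for $b\in\mathbf{B}\cup\{25,37\}$, and the three geometric families $2\cdot 4^{\alpha}$, $6\cdot 4^{\alpha}$, $14\cdot 4^{\alpha}$ for $\alpha\ge 0$. I want to show that $n$ is a sum of four positive squares if and only if $n\notin E$. I would first note that the excluded set has no exception exceeding $41$ that is coprime to $4$; every large exception is divisible by $4$ (indeed by $8$), which is exactly the behavior the descent will explain.

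The structural heart of the argument is a reduction modulo $8$. Starting from a representation $4n=w^2+x^2+y^2+z^2$ into four positive squares, I would observe that squares are $\equiv 0$ or $1\pmod 4$, so the number of odd entries is $0$ or $4$, and that an all-odd sum is $\equiv 4\pmod 8$. Consequently, if $n$ is even then every four-positive-square representation of $4n$ must be all-even, and halving it yields such a representation of $n$; conversely, doubling a representation of $n$ gives one of $4n$. This produces the key equivalence: \emph{for even $n$, the integer $n$ is a sum of four positive squares if and only if $4n$ is}. Alongside it I would establish the companion fact that \emph{every $n\equiv 4\pmod 8$ is a sum of four odd (hence positive) squares}, so that no integer in this residue class is exceptional. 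Together these guarantee that the factor-$4$ families can only be seeded by the even base exceptions $2,6,14$, and never by $4\cdot(\text{odd exception})$.

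With these two facts in hand, completeness follows by a finite verification together with a constructive core. First I would check every $n\le 41$ directly, confirming exactly the members of $E$ in this range. For $n>41$ the claim is that $n$ is always representable unless $4\mid n$. The case $4\nmid n$ is the constructive core: I would exhibit some $k\ge 1$ for which $n-k^2$ is a sum of three positive squares, writing $n=k^2+s$ with $s$ a sum of three positive squares, which reduces matters to the (sparse) exceptional set for three positive squares via Legendre's three-square theorem. Finally, if $4\mid n$ and $n>41$, write $n=4m$: when $m$ is odd we have $n\equiv 4\pmod 8$, so the companion fact gives representability, while when $m$ is even the equivalence forces $n$ to be exceptional precisely when $m$ is, and iterating the descent lands in the base exceptions $\{2,6,14\}$. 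This confines every exception above $41$ to the three geometric families, matching $E$ exactly.

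The hard part will be the constructive core for $4\nmid n$ together with the companion statement for $n\equiv 4\pmod 8$. Both ultimately rest on controlling when a number that is a sum of three non-negative squares fails to be a sum of three \emph{positive} squares, a phenomenon governed by Legendre's criterion $n\neq 4^{a}(8b+7)$ plus a finite list of small numbers whose only three-square representations use a zero. Verifying that this obstruction is sparse enough that some shift $n-k^2$ with small $k$ always escapes it once $n>41$, and establishing the four-odd-squares statement (either from the classical formula for four-square representation counts or by a short elementary argument), is where the genuine case analysis lies. By contrast, the descent under multiplication by $4$ is clean, and it is precisely what organizes the otherwise irregular-looking exceptional set into its final arithmetic shape.
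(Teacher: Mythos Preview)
The paper does not prove this lemma at all: it is stated as a classical result attributed to Dubouis (1911) and cited from Grosswald, and the \texttt{proof} environment that immediately follows it in the source is actually the proof of Theorem~\ref{Taxicab(2,5,188)}, which \emph{uses} Lemma~\ref{Dubouis_1} as a black box. So there is nothing in the paper to compare your argument against.

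That said, your outline is a standard and viable route to the result. The descent step is correct: if $n$ is even and $4n=w^2+x^2+y^2+z^2$ with all parts positive, then modulo~$8$ the all-odd case is ruled out, so all parts are even and one may halve. The companion fact that every $n\equiv 4\pmod 8$ is a sum of four odd squares is classical (it drops out of Jacobi's formula $r_4(n)=8\sum_{d\mid n,\,4\nmid d}d$, or can be done by hand). The genuine work, as you identify, is the ``constructive core'' for $n>41$ with $4\nmid n$: showing some shift $n-k^2$ is a sum of three \emph{positive} squares. Be aware that the obstruction set here is not just Legendre's $4^a(8b+7)$ together with a finite list; numbers of the form $4^a$ and $2\cdot 4^a$ (and a few other infinite families) also fail to be sums of three positive squares, so your sparseness argument must handle those as well. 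With $4\nmid n$ and two or three values of $k$ available this is manageable, but it is where the case analysis lives, and your sketch stops short of carrying it out.
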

\begin{proof}
  Consider an arbitrary representation of a positive integer $n$ as the sum of $5$ positive squares. Let $n=a+b$ where $a=x_1^2$ and $b=x_2^2+x_3^2+x_4^2+x_5^2$. By Lemma \ref{Dubouis_1}, the only positive integers greater than $41$ that cannot be expressed as a sum of exactly four squares are $2\cdot4^\alpha, 6\cdot4^\alpha,$ and $14\cdot4^\alpha$ for $\alpha = 1, 2, \ldots$ For all $n \ge 921681$, there are at most $21$ integers between $n - 919681$ and $n$ that cannot be expressed as a sum of four squares.  
  
  Let $\gamma(n)$ be the number of positive integers between $n$ and $n-919681$ such that $n$ cannot be expressed as a sum of exactly four squares. Since $n \ge 921681, n - 919681 \ge 2000.$ Then, the smallest value of $\alpha$ that yields an integer that \textit{cannot} be expressed as a sum of four squares is $\lceil \log_4\left(\frac{n-919681}{2}\right)\rceil$, and the largest value of $\alpha$ that \textit{can} be expressed as a sum of four squares is $\lfloor\log_4\left(\frac{n}{14}\right)\rfloor.$ Note that for each value of $\alpha$, there are at most $3$ positive integers that cannot be expressed as the sum of four positive squares. It follows that $\gamma(n) \le 3\left(\log_4\left(\frac{n}{2}\right)-\log_4\left(\frac{n-919681}{14}\right)\right),$ which is a decreasing function. Evaluation at $n=921681$ shows $\gamma(n) \le 18$ for all $n \ge 921681.$ Therefore, there are at least $\lfloor \sqrt{919681}-18 \rfloor = 941$ combinations of a square and a sum of four squares that sum to $n$. At most $\binom{5}{4}=5$ of these sums give the same combination of $5$ squares. Hence, for all $n > 921681$, there are at least $\lceil \frac{941}{5} \rceil = 189$ ways to represent $n$ as a sum of $5$ positive squares. Johnson \cite{A080673} has already checked that $\text{Taxicab}(2,5,188)$ does not exist for $n < 10^6$.
  \end{proof}
Hasler \cite{A080673} asks if any of the other listed elements in the sequence where $A080673(n)=0$ are proven. This paper establishes the nonexistence of $\text{Taxicab}(2,5,m)$ for $m \leq 500$, and proves some other values of $n$ such that $A080673(n)=0$.
\begin{theorem}
  $\text{Taxicab}(2,5,m)$ does not exist for $m = 259, 304, 308, 372, 394, 483, 497$.
\end{theorem}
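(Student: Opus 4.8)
The plan is to reduce each case to a finite computation, exactly as in the proof of Theorem \ref{Taxicab(2,5,188)}. For a fixed target $m$ I will exhibit an explicit threshold $B(m) = O(m^2)$ with the property that $p^2(n,5) > m$ for every $n \ge B(m)$, and then verify by computer that $p^2(n,5) \ne m$ for every $n < B(m)$. Together these show that $m$ is never a value of $p^2(\cdot,5)$, i.e. $\text{Taxicab}(2,5,m) = \emptyset$.

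To build the threshold I reuse the windowed decomposition from the $m = 188$ case. First I write $n = a + b$ with $a = x_1^2$ a \emph{positive} perfect square and $b = x_2^2 + x_3^2 + x_4^2 + x_5^2$ a sum of four positive squares, but I restrict $a$ to the window $a \in \{1^2, 2^2, \dots, w^2\}$, so that $b = n - a$ ranges over $[\,n - w^2,\ n - 1\,]$; the integer $w = w(m)$ is chosen below. Assuming $n \ge 2w^2$ guarantees $n - w^2 \ge w^2 > 41$, so none of the finitely many exceptional values $\le 41$ in Lemma \ref{Dubouis_1} lies in the window, and the only $b$ in $[\,n - w^2,\ n-1\,]$ that fail to be sums of four positive squares are of the form $2\cdot 4^\alpha$, $6\cdot 4^\alpha$, or $14\cdot 4^\alpha$. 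Exactly as in the proof of Theorem \ref{Taxicab(2,5,188)}, the number $\gamma$ of such values is at most $3\big(\log_4(n/2) - \log_4((n - w^2)/14)\big) = 3\log_4 7 + 3\log_4\!\big(\tfrac{n}{n - w^2}\big)$, a decreasing function of $n$ that for $n \ge 2w^2$ is bounded by the absolute constant $\gamma_0 := 3\log_4 14 < 6$, independently of $w$. Hence at least $w - \gamma_0$ of the $w$ values of $a$ produce a representation of $n$ as a positive square plus a sum of four positive squares — that is, as a sum of five positive squares — and since each partition of $n$ into five positive squares comes from at most $\binom{5}{4} = 5$ choices of the removed part $a$, we obtain
\[
p^2(n,5)\ \ge\ \frac{w - \gamma_0}{5}\qquad\text{for all } n \ge 2w^2 .
\]
Taking $w = w(m) := \big\lceil 5(m+1) + \gamma_0 \big\rceil$ forces the right-hand side past $m$, so $B(m) := 2\,w(m)^2$ has the desired property; note that $B(m)$ is of the same order $O(m^2)$ as the bound $N^2(m,5)$ of Theorem \ref{UpperBound}.

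What remains is the finite search. For each $m \in \{259, 304, 308, 372, 394, 483, 497\}$ I will compute $p^2(n,5)$ for all $n < B(m)$ — e.g. via the recursion of Theorem \ref{identity_3}, or equivalently a truncated generating function — and check that the value $m$ never occurs; since $B(m) \le 2(5\cdot 498 + 6)^2 < 1.3 \times 10^7$ even for the largest case $m = 497$, this is a finite (if substantial) Mathematica computation of the kind already used elsewhere in the paper. Combining the two steps, $m$ is attained neither below $B(m)$ nor at or above $B(m)$, so $\text{Taxicab}(2,5,m)$ does not exist.

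The hard part will be keeping this search within reach, which is a purely computational matter: the threshold grows quadratically in $m$, so one wants to extract the constant $\gamma_0$ reasonably sharply — as was done for $m = 188$, where a window of size $919681$ already sufficed — rather than settle for a crude bound. One must also confirm that the search range $n < B(m)$ abuts precisely the regime $n \ge 2w(m)^2$ in which the window estimate applies; because $B(m) = 2w(m)^2$ this is automatic, but it should be recorded so that no $n$ is left unchecked.
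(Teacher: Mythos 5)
Your argument is correct and is essentially the paper's own approach: it is the windowed decomposition from the $m=188$ proof (a positive square plus a sum of four positive squares, with the exceptional values $2\cdot4^\alpha$, $6\cdot4^\alpha$, $14\cdot4^\alpha$ counted logarithmically and the overcount controlled by the factor $\binom{5}{4}=5$), made uniform in $m$ by letting the window size grow linearly with $m$. The paper states this theorem without a written proof, so your explicit threshold $B(m)=2w(m)^2$ together with the finite search below it is precisely the intended argument, differing from the $m=188$ case only in the bookkeeping of constants.
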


\subsection{Proof of Theorem \ref{Taxicab(2,6,36)}}
\begin{proof}
     A computer search verifies that no $n \leq N^2(36,6)=(6\cdot36+6+14)^2=55696$ can be expressed as the sum of $6$ positive squares in exactly $36$ ways. By Theorem \ref{UpperBound}, $\text{Taxicab}(2,6,36)$ does not exist.
  \end{proof}
  \noindent Other values of $n$ such that $A295702(n)=0$ are established for $m<500$.

  \begin{theorem}
  $\text{Taxicab}(2,6,m)$ does not exist for\\
  \\
  $m =$\ 70, 82, 99, 116, 124, 126, 139, 140, 147, 162, 164, 165, 171, 182, 190, 193, 197, 205, 206, 207, 212, 214, 218, 222, 227, 231, 240, 243, 256, 273, 277, 280, 285, 287, 288, 291, 292, 300, 302, 322, 330, 334, 339, 344, 346, 347, 353, 356, 360, 364, 372, 379, 380, 383, 385, 392, 395, 396, 398, 402, 405, 407, 408, 410, 411, 412, 413, 425, 431, 432, 435, 436, 437, 439, 442, 443, 446, 448, 450, 451, 457, 466, 472, 474, 476, 482, 483, 485, 488, 491, 492, 493, 500.
\end{theorem}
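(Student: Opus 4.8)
The plan is to treat the theorem as $93$ separate instances of the argument already used for Theorem~\ref{Taxicab(2,6,36)}, which is itself the $j=6$ specialization of the method behind Theorem~\ref{UpperBound}. Fix $j=6$ and write an arbitrary representation of $n$ as a sum of six positive squares as $n=a+b$ with $a=x_1^2$ a perfect square and $b$ a sum of five positive squares. By Lemma~\ref{Dubouis_2} the set of positive integers that are \emph{not} a sum of five positive squares is finite with largest element $33=5+28$, so once $n\ge N^2(m,6)=(6m+20)^2$ the squares $a=1^2,2^2,\dots,(6m+1)^2$ all satisfy $n-a\ge 34$, hence $n-a$ is a sum of five positive squares. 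That produces at least $6m+1$ admissible pairs $(a,b)$; since a given unordered representation arises from at most $\binom{6}{1}=6$ such pairs, the pigeonhole principle yields at least $m+1$ representations of $n$ as a sum of six positive squares. Thus $p^2(n,6)\neq m$ for every $n\ge N^2(m,6)$, exactly as in the proof of Theorem~\ref{UpperBound}.

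It then remains to confirm, for each $m$ in the list, that $p^2(n,6)\neq m$ for all $n< N^2(m,6)$ as well. Rather than rerun a search for each of the $93$ values, I would compute the single table $\{\,p^2(n,6):0\le n\le N^2(500,6)\,\}$, where $N^2(500,6)=3020^2=9{,}120{,}400$, in one pass --- for instance via the recursion of Theorem~\ref{identity_3}, or equivalently by extracting the coefficient of $y^6$ in $\prod_{i^2\le n}(1-x^{i^2}y)^{-1}$ through iterated bounded convolutions --- and then record the set $M=\{\,m\le 500: m\notin\{p^2(n,6):n\le 9{,}120{,}400\}\,\}$ of multiplicities that never occur. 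One expects $M$ to be precisely the list in the statement; for each $m\in M$, combining this computational check with the bound of the previous paragraph gives $p^2(n,6)\neq m$ for all $n\ge 1$, i.e.\ $\text{Taxicab}(2,6,m)=\emptyset$.

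The main obstacle is the scale of this computation and the discipline needed to trust it: the table has roughly $9\times 10^6$ entries, each assembled from $O(\sqrt{n})$ earlier ones, so a naive evaluation is on the order of $10^{11}$ arithmetic operations. This is within reach but demands a careful implementation (bounded-part convolutions with no recomputation, a length-$9{\cdot}10^6$ array of small integers), and the output should be validated against the known data --- the value $\text{Taxicab}(2,6,36)$ and the OEIS entry A295702 --- before the list is trusted. A smaller point to settle once is that $N^2(m,6)$ is genuinely large enough at $j=6$: because $j-1=5$ carries the extra exceptional value $28$ from Lemma~\ref{Dubouis_2}, one checks that the slack in $(mj+j+14)^2$ still absorbs it, which it does comfortably since $(6m+20)^2-(6m+1)^2=228m+399$ vastly exceeds $34$; after that no value of $m$ in the list requires individual attention.
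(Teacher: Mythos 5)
Your proposal matches the paper's (implicit) proof: the paper establishes this list exactly as in Theorem~\ref{Taxicab(2,6,36)}, i.e.\ a computer search of $p^2(n,6)$ up to the bound $N^2(m,6)=(6m+20)^2$ combined with the pigeonhole argument of Theorem~\ref{UpperBound}, and your single sweep up to $N^2(500,6)$ is just an efficient packaging of the same check for all $m\le 500$ at once. Your added verification that the $j=6$ case survives the extra exceptional value $28$ in Lemma~\ref{Dubouis_2} (the paper states Theorem~\ref{UpperBound} only for $j>6$ yet applies it at $j=6$) is a worthwhile point the paper leaves unaddressed, but it does not change the route.
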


\subsection{Proof of Theorem \ref{Taxicab(2,7,44)}}
\begin{proof}
     A computer search verifies that no $n \leq N^2(44,7)=(7\cdot44+7+14)^2=107584$ can be expressed as the sum of $7$ positive squares in exactly $44$ ways. By Theorem \ref{UpperBound}, $\text{Taxicab}(2,7,44)$ does not exist.
  \end{proof}
  \noindent Other values of $n$ such that $A295795(n)=0$ are established for $m<500$.

  \begin{theorem}
  $\text{Taxicab}(2,7,m)$ does not exist for\\
  \\
  $n =$\ 47, 59, 63, 67, 74, 81, 90, 97, 105, 106, 108, 110, 111, 112, 119, 120, 122, 125, 126, 131, 132, 140, 142, 143, 148, 151, 153, 158, 163, 166, 168, 169, 171, 174, 175, 176, 182, 189, 190, 192, 195, 196, 198, 199, 204, 207, 208, 211, 213, 215, 217, 222, 224, 225, 226, 237, 238, 239, 240, 242, 244, 246, 247, 250, 253, 255, 257, 260, 262, 266, 267, 269, 271, 272, 273, 276, 278, 279, 280, 283, 285, 289, 292, 293, 296, 297, 298, 299, 300, 301, 303, 304, 307, 314, 315, 319, 321, 322, 325, 326, 327, 329, 330, 332, 334, 338, 339, 341, 342, 343, 344, 347, 349, 353, 357, 358, 360, 361, 362, 363, 364, 366, 370, 371, 372, 376, 378, 384, 385, 386, 388, 389, 390, 392, 393, 394, 396, 399, 401, 402, 403, 404, 405, 406, 408, 411, 413, 419, 421, 427, 428, 429, 432, 433, 434, 435, 437, 438, 439, 440, 441, 442, 444, 445, 448, 449, 451, 452, 453, 455, 460, 465, 466, 467, 469, 470, 471, 472, 474, 476, 477, 479, 480, 482, 483, 485, 489, 490, 493, 495, 496, 497, 500.
\end{theorem}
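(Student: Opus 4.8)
The statement to prove is that $\text{Taxicab}(2,7,m)$ does not exist for the listed large set of values $n$ (the theorem uses $n$ as the dummy variable, but these are values of $m$). The approach mirrors the proof of Theorem~\ref{Taxicab(2,6,36)} exactly: for each such value $m$ in the list, invoke Theorem~\ref{UpperBound} with $k=2$ and $j=7$. Concretely, the plan is to compute the bound $N^2(m,7) = (7m+7+14)^2 = (7m+21)^2$ for each $m$ in the list, then carry out an exhaustive computer search over all positive integers $n \le N^2(m,7)$ to verify that none of them satisfies $p^2(n,7)=m$. Once that verification is complete, Theorem~\ref{UpperBound} (together with the earlier observation that for $n < 4j = 28$ the identity $p^2(n,j)=p^2(n-1,j-1)$ holds, so the sequence is eventually governed by the stabilization in Lemma~\ref{lemma_D,G,R}) immediately yields that no $n$ anywhere can have $p^2(n,7)=m$, i.e. $\text{Taxicab}(2,7,m)=\emptyset$.

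The key steps in order: first, for the largest value $m=500$ in the list, note $N^2(500,7)=(3521)^2=12397441$, so it suffices to run one search up to roughly $1.24\times 10^7$ and record, for each $n$, the value $p^2(n,7)$; this single pass simultaneously handles every $m$ in the list since each listed $m$ has $N^2(m,7)\le N^2(500,7)$. Second, confirm from the recorded data that for each listed $m$, the set $\{\,n \le N^2(m,7) : p^2(n,7)=m\,\}$ is empty. Third, apply Theorem~\ref{UpperBound}: since $j=7>6$ and $m>1$ for every entry, the theorem asserts that if no solution exists below $N^2(m,7)$ then no solution exists at all, which is exactly the claim. Fourth, remark that this resolves the corresponding undefined zeros $A295795(m)=0$ in the OEIS entry.

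Computing $p^2(n,7)$ for all $n$ up to $N^2(500,7)$ is best done not by naive enumeration of $7$-tuples but via the recursive identity of Theorem~\ref{identity_3}, $p^2(n,j,\mu)=\sum_{i^2\le\mu}p^2(n-i^2,j-1,i^2)$, building a dynamic-programming table in the variables $(n,j,\mu)$; equivalently one extracts coefficients from the truncated generating function $\prod_{i^2\le n}(1-x^{i^2}y)^{-1}$ up to $y^7$ and $x^{N^2(500,7)}$. The main obstacle is purely computational rather than conceptual: the table has on the order of $10^7$ rows for $n$ times $O(\sqrt{n})$ columns for the bound on the largest part, so care with memory (storing only the layers $j=1,\dots,7$ and iterating $\mu$ in increasing order) and with integer overflow in the counts is needed, but no new mathematical input is required beyond Theorem~\ref{UpperBound} and the partition identities already established. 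A secondary, minor point to verify is that the hypothesis "$j>6$" of Theorem~\ref{UpperBound} is met — it is, since $j=7$ — so the theorem applies verbatim to every $m$ in the list.
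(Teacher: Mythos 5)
Your proposal matches the paper's approach exactly: the paper establishes these values by the same method as its proof of Theorem 1.5, namely a computer search up to $N^2(m,7)=(7m+21)^2$ for each listed $m$ followed by an appeal to Theorem 1.2, and your dynamic-programming implementation details are just a sensible way to carry out that search. (Minor note: the paper's own evaluation $N^2(44,7)=107584$ is a slight arithmetic slip for $329^2=108241$; your formula $(7m+21)^2$ is the correct reading of Theorem 1.2.)
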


\subsection*{Entry A025416 \cite{A025416}} There is a conjecture in the OEIS that illuminates the behavior of $\text{Taxicab}(2,j,m)$ for a fixed $j$. Define the sequence $A025416(n)$ as the sequence given by $\text{Taxicab}(2,4,m)$ for $m=1,2,\ldots$ Perry \cite{A025416} conjectures that the sequence $A025416(n)$ never becomes monotone. 
\begin{figure}[h!]
    \centering    \includegraphics[scale=.4]{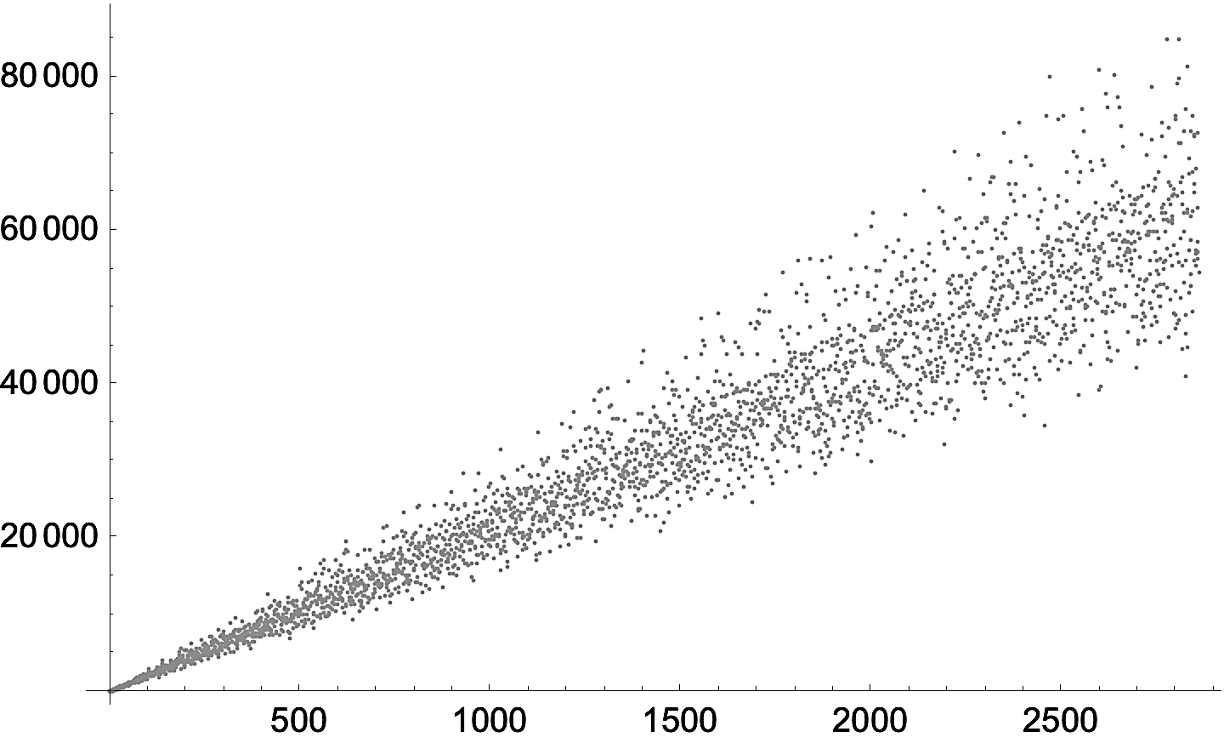}
    \caption{$\text{Taxicab}(2,4,m)$ for $m \leq 2861$.}
    \label{fig:enter-label}
\end{figure}
\[
\text{Taxicab}(2,4,m): 4, 31, 28, 52, 82, 90, 135, 130, 162, 198, 202, 252, 234, 210, \ldots
\]
for $m=1,2,\ldots$ Perry's conjecture appears to hold for all $j$ and is generalized in the following conjecture. 
\begin{conjecture}[Perry]\label{Perry}
    Given a fixed $j>2$, as $m$ increases, the sequence given by $\text{Taxicab}(2,j,m)$ never becomes monotone.
\end{conjecture}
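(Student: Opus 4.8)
\emph{Reformulation.} Write $g_j(n)=p^2(n,j)$ and $f(m)=\text{Taxicab}(2,j,m)=\min\{n:g_j(n)=m\}$, a function defined on $S_j=\{m:g_j^{-1}(m)\neq\emptyset\}$; I assume $S_j$ is infinite (otherwise the statement is vacuous). Since $\{m:f(m)\le N\}$ is finite for each $N$, we have $f(m)\to\infty$, so the sequence cannot be eventually non-increasing, and it remains only to produce infinitely many \emph{descents}: consecutive elements $m<m'$ of $S_j$ with $f(m)>f(m')$.

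\emph{The running-maximum mechanism.} Let $R(N)=\max_{n\le N}g_j(n)$. Call $\hat n$ a record with a jump of size $\ge 2$ if $g_j(\hat n)\ge R(\hat n-1)+2$; set $b=g_j(\hat n)$ and $a=R(\hat n-1)$, so every integer of $[a+1,b-1]$ is unattained by $g_j$ on $[1,\hat n]$, while $g_j(\hat n)=b$. If $S_j\cap[a+1,b-1]\neq\emptyset$, let $v^{\ast}$ be its largest element; then $b$ is the successor of $v^{\ast}$ in $S_j$, $f(b)=\hat n$, and $f(v^{\ast})\ge\hat n+1$, so $(v^{\ast},b)$ is a descent. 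Conversely, if the sequence were eventually increasing then for all large $N$ every element of $S_j$ below $R(N)$ would already be attained, forcing all sufficiently large elements of $S_j$ to be record values, so that only finitely many size-$\ge 2$ jumps straddle an element of $S_j$. Thus the conjecture is \emph{equivalent} to the assertion that infinitely many such jumps occur.

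\emph{Quantitative reduction.} The record values, together with the gap intervals $[a_k+1,b_k-1]$ of the size-$\ge 2$ jumps $a_k\to b_k$, cover $[R(1),\infty)$, so
\[
\bigl|S_j\cap[1,x]\bigr|\le\#\{\text{record values}\le x\}+\sum_{k}\#\bigl(S_j\cap[a_k+1,b_k-1]\bigr)+O(1).
\]
Hence it suffices to prove $\#\{\text{record values}\le x\}=o\bigl(|S_j\cap[1,x]|\bigr)$: the leftover mass then cannot lie in finitely many (finite) gaps, so infinitely many jumps have nonempty gap, giving infinitely many descents. For the record count, $\#\{\text{record values}\le x\}\le\#\{n:R(n)\le x\}$, and $R(n)\ge\frac1n\sum_{m\le n}g_j(m)\gg_j n^{j/2-1}$ by the classical estimate $\sum_{m\le n}p^2(m,j)\asymp_j n^{j/2}$ (lattice points in a $j$-ball, after discarding the $O(n^{(j-1)/2})$ tuples having a zero coordinate and the bounded permutation factors). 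This yields $\#\{\text{record values}\le x\}\ll_j x^{2/(j-2)}$ for $j\ge 5$; for $j=4$ the sharper $R(n)\gg n\log\log n$ (from $p^2(n,4)\gg r_4(n)$, Jacobi's formula, and odd integers rich in small primes) gives $\#\{\text{record values}\le x\}\ll x/\log\log x$. In both ranges this is $o(x)$.

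\emph{Main obstacle.} What remains is a lower bound on $|S_j\cap[1,x]|$, i.e.\ on the number of distinct values of $p^2(n,j)$, strong enough to beat the record count: for $j=4$ it would suffice that $S_4$ has density $1$ (strongly suggested by the data for A025416 but which I do not see how to prove), and for $j\ge 5$ that $|S_j\cap[1,x]|\gg x^{2/(j-2)}\omega(x)$ with $\omega(x)\to\infty$, i.e.\ that $p^2(\cdot,j)$ is far from constant on each dyadic block. This is exactly the type of control over the range of $p^2(n,j)$ that lies behind the open problems about $\{m_i^2\}$ in Section~4, and I expect it to be the crux. The case $j=3$ is genuinely harder: there $R(n)$ is only $\gg\sqrt n$, so the record count bound $\ll x^{2}$ is too weak for this counting argument, and one would instead need a direct construction — parametric families of pairs $n,n'$ with $p^2(n,3)=p^2(n',3)\pm1$ built from ternary quadratic form identities, together with control of class-number-type fluctuations — to exhibit infinitely many descents by hand.
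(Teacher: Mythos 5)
First, be aware that the statement you are proving is presented in the paper as a \emph{conjecture} (attributed to Perry), not a theorem: the authors offer only heuristic evidence for it (the regular structure visible in Figure \ref{bitmap}, together with the unproved assumption that $\{m_i^2\}$ and its complement are both infinite), so there is no proof in the paper to compare against. Your reduction is, as far as it goes, sound and more substantive than anything the paper provides: the observation that $f(m)\to\infty$ rules out eventual non-increase; the descent mechanism via records of $g_j=p^2(\cdot,j)$ whose jump interval meets the value set $S_j$ is correct (if $\hat n$ sets a record $b$ with $R(\hat n-1)=a\le b-2$ and some $v^{\ast}\in S_j\cap[a+1,b-1]$ exists, then indeed $f(b)=\hat n<f(v^{\ast})$ with $v^{\ast}<b$ consecutive in $S_j$); and the bound $\#\{\text{record values}\le x\}\ll_j x^{2/(j-2)}$ for $j\ge5$ via $R(n)\ge n^{-1}\sum_{m\le n}g_j(m)\gg_j n^{j/2-1}$ is correct, since the number of unordered representations of all $m\le n$ as $j$ positive squares is comparable to the lattice-point count $\asymp_j n^{j/2}$.

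However, the proposal is not a proof, and you identify the gap yourself. The whole argument is conditional on a lower bound for $|S_j\cap[1,x]|$, the number of distinct values taken by $p^2(\cdot,j)$, strong enough to dominate the record count; nothing in your write-up, and nothing in the paper, supplies such a bound. This is exactly the kind of control over the range of $p^2(\cdot,j)$ that the paper itself flags as open in Section 4 (whether $\{m_i^2\}$ and its complement are infinite, and whether an asymptotic for $\{m_i^2\}$ exists), so you have reduced one open problem to another rather than closed it. Concretely: for $j\ge5$ you need $|S_j\cap[1,x]|/x^{2/(j-2)}\to\infty$; for $j=4$ you need something like positive density of $S_4$, which you admit you cannot prove; and for $j=3$ the method fails structurally, since $R(n)\gg\sqrt n$ only yields a record count $\ll x^2$ against the trivial upper bound $|S_3\cap[1,x]|\le x$, so no counting argument of this shape can work and the ``direct construction'' you gesture at is not carried out. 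The correct characterization of your submission is a clean equivalence (infinitely many descents if and only if infinitely many size-$\ge2$ record jumps straddle $S_j$) plus a conditional counting argument for $j\ge4$, with the crux left open and the case $j=3$ untouched.
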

There is good evidence to believe Conjecture \ref{Perry}. Note that these sequences are essentially the columns of Figure \ref{bitmap}, and most values of $m$ and $j$ lie in the highly regular region on the right hand side. On the OEIS, values are $0$ whenever $\text{Taxicab}(2,j,m)$ does not exist, and some positive integer whenever $\text{Taxicab}(2,j,m)$ does exist. By that alone, it appears that Perry's conjecture holds, supposing there are infinitely many elements of both $m_i$ and $\mathbb{N}\setminus m_i$. But Perry's conjecture made for $m=4$ contains no values of $0$, that is, $\text{Taxicab}(2,4,m)$ always exists (at least as far as OEIS A025416 has entries). Define $m_{i,j}^2$ as the sequence of $m$ such that $\text{Taxicab}(2,j,m)$ exists. Then, Perry's conjecture may be generalized by the following:
\begin{conjecture}
    For each $m$ and $j$, the sequence $m_{i,j}^2$ never becomes monotone.
\end{conjecture}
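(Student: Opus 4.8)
The plan is to recast the conjecture as a statement about the running maximum of the sequence $n\mapsto p^2(n,j)$. Fix $j\ge 2$ and, following the notation of the statement, write $T(m)=\text{Taxicab}(2,j,m)$ for the valid indices $m\in m_{i,j}^2$, i.e.\ those $m$ for which $p^2(n,j)=m$ is solvable; the sequence under consideration is $T$ listed in increasing order of valid $m$, that is, the Perry sequence with its zeros removed. First I would dispose of one direction: since $p^2(n,j)\le p(n,j)=O_j(n^{j-1})$, any $n$ realizing $p^2(n,j)=m$ obeys $n\gg_j m^{1/(j-1)}$, so $T(m)\to\infty$ along $m_{i,j}^2$; in particular $T$ is unbounded and hence not eventually non-increasing. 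It then remains to prove that $T$ is not eventually non-decreasing, i.e.\ that there are infinitely many consecutive valid pairs $m<m'$ with $T(m)>T(m')$ (``descents'').

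For the descents, set $M_N=\max_{1\le n\le N}p^2(n,j)$ and $S_N=\{p^2(n,j):1\le n\le N\}\subseteq[1,M_N]$. The key point is that if for some $N$ there is a valid index $m^\star\le M_N$ with $m^\star\notin S_N$, then $T(m^\star)>N$ (the value $m^\star$ is not yet achieved at stage $N$), whereas $m^\star<M_N$ (since $M_N\in S_N$ but $m^\star\notin S_N$) and $T(M_N)\le N$; hence, listing the valid indices from $m^\star$ up to $M_N$, the function $T$ must strictly decrease at some consecutive valid pair, i.e.\ a descent occurs in $[m^\star,M_N]$. Because $S_N$ increases with $N$ to the whole set of valid indices, for every bound $B$ one has $\{\text{valid }m\le B\}\subseteq S_N$ once $N$ is large, so whenever such an $m^\star$ exists it satisfies $m^\star>B$ for $N$ large; consequently, if there is a valid $m\le M_N$ with $m\notin S_N$ for infinitely many $N$, then descents occur at $m$-values tending to infinity and there are infinitely many of them. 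It therefore suffices to show that for infinitely many $N$ some valid index $m\le M_N$ lies outside $S_N$.

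The natural way to force this is a counting argument: since $|S_N|\le N$, it is enough to have $\#\{m\le M_N:\ m\text{ valid}\}>N$ for infinitely many $N$, which follows from (i) $M_N/N\to\infty$ together with (ii) a positive lower density $\delta>0$ of valid $m$ in $[1,M_N]$. Part (i) holds for all $j\ge 4$: for $j=4$ one uses $r_4(n)=8\sigma(n)$ on odd $n$ and $\limsup_n\sigma(n)/n=\infty$ (take $n$ a product of the smallest odd primes) to get $p^2(n,4)\gg n\log\log n$ along a sequence of $n$, hence $M_N\gg N\log\log N$; for $j\ge 5$ the circle method (or explicit formulas for $r_j$) gives $p^2(n,j)\gg_j n^{j/2-1}$ for a positive proportion of $n$, hence $M_N\gg_j N^{j/2-1}\gg N$. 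Granting (i) and (ii), $\#\{m\le M_N:\ m\text{ valid}\}\ge\delta M_N>N$ for all large $N$, which completes the argument for $j\ge 4$.

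The step I expect to be the real obstacle is (ii): proving that a positive proportion of the integers up to a given height are of the form $p^2(n,j)$. This is exactly the fine information about the value set of $p^2(\cdot,j)$ that the rest of the paper reaches only computationally, and it is genuinely delicate for fixed moderate $j$ --- already Theorem~\ref{Taxicab(2,5,188)} and the lists following it show that infinitely many $m$ may well be \emph{invalid}. A plausible route is a variance bound: estimate $\sum_{n\le X}p^2(n,j)^2$ against $\big(\sum_{n\le X}p^2(n,j)\big)^2$ via Cauchy--Schwarz to produce distinct values, but the exponents only yield $\gg X$ distinct values below the height $M_X\asymp X^{j/2-1}$, so one must instead combine different dyadic ranges of $n$ --- whose value sets overlap only partially --- to cover a positive density of $\mathbb{N}$, and making this unconditional is the crux. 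Finally, $j=2$ and $j=3$ fall outside this scheme: for $j=2$, $\text{Taxicab}(2,2,m)$ is essentially the least integer composed only of primes $\equiv1\pmod{4}$ with a prescribed number of divisors, and its non-monotonicity follows from the classical non-monotonicity of the least integer with a given divisor count, while $j=3$ would require the arithmetic oscillation of $r_3$ (equivalently of class numbers, where effective bounds are scarce). These two cases, together with (ii), are why the statement is recorded here only as a conjecture.
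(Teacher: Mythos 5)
The paper offers no proof of this statement: it is recorded purely as a conjecture, supported only by the numerical data in Figure 1, so there is no argument of the authors' to compare yours against. Your formal reduction is genuinely nice and, as far as it goes, correct: if some attainable value $m^\star$ lies below the running maximum $M_N$ of $p^2(\cdot,j)$ but outside the value set $S_N$ of the first $N$ integers, then $T(m^\star)>N\geq T(M_N)$ forces a descent between consecutive valid indices, and such descents must occur at arbitrarily large $m$. That is a clean way to turn non-monotonicity into a counting problem, and it is more than the paper provides.

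However, there is a concrete flaw beyond the gap you acknowledge. Your hypothesis (ii) --- positive lower density of the valid $m$ in $[1,M_N]$ --- is not merely delicate for $j\geq5$; it is false. For $j\geq5$ the singular series for sums of $j$ squares is bounded away from $0$ and $\infty$, so $p^2(n,j)\asymp_j n^{j/2-1}$ for all large $n$ (representations involving a zero contribute only $O(n^{(j-3)/2})$). Hence every valid $m\leq X$ arises from some $n\ll_j X^{2/(j-2)}$, and the counting function of the value set is $O_j(X^{2/(j-2)})=o(X)$: the valid $m$ have density zero. Worse, taking $X=M_N\asymp N^{j/2-1}$ gives $\#\{\text{valid } m\leq M_N\}=O_j(N)$, which is exactly the same order as your upper bound $|S_N|\leq N$, so the pigeonhole comparison degenerates and cannot produce the unattained $m^\star$ you need. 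The scheme survives in principle only for $j=4$, where $M_N\gg N\log\log N$ while the value-set count is not obviously $o(X)$, but there (ii) remains unproven; and $j=2,3$ are outside the method entirely, as you note. So the proposal is a promising reduction for $j=4$ with an open density input, but as a route to the general conjecture it breaks structurally at $j\geq5$ and would need to be replaced by an argument that exhibits specific unattained values (e.g.\ values of $p^2(n,j)$ near its local minima for $n$ slightly beyond $N$) rather than a density count.
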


\begin{conjecture}
    For $j<4$ and for all $m$, there always exists a positive integer $n$ such that $\text{Taxicab}(2,j,m) = n$.
\end{conjecture}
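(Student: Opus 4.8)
Unlike the large‑$j$ regime governed by Theorem~\ref{UpperBound} (which needs $j>6$), the range $j\le 3$ is controlled by none of the preceding machinery, so the plan is to treat $j=1,2,3$ one at a time. The case $j=1$ is degenerate: $p^2(n,1)$ equals $1$ when $n$ is a perfect square and $0$ otherwise, so $\text{Taxicab}(2,1,m)$ exists only for $m=1$ (where it is $1$), and the conjecture should be read with the paper's standing hypothesis $j\ge 2$. For $j=2$ I would exhibit an explicit witness. Fix a prime $p\equiv 1\pmod 4$ and set $n=p^{2m-1}$. Every divisor of $n$ is $\equiv 1\pmod 4$, so the classical count $r_2(N)=4\bigl(d_1(N)-d_3(N)\bigr)$ of ordered signed solutions of $x^2+y^2=N$ (with $d_i$ counting divisors $\equiv i\bmod 4$) gives $r_2(n)=8m$; as $n$ is odd and not a square, no such solution has a zero coordinate or two coordinates of equal absolute value, so they split into $m$ orbits of size $8$ under the sign changes and the coordinate swap. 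Hence $p^2(n,2)=m$ and $\text{Taxicab}(2,2,m)\le p^{2m-1}$ for every $m\ge 1$.

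The case $j=3$ I would reduce to the arithmetic of the ternary form $x^2+y^2+z^2$; let $r_3(n)$ denote its number of ordered signed representations of $n$. Partitioning representations according to how many coordinates vanish and how many coincide expresses $r_3(n)$ as a fixed linear combination of $p^2(n,3)$ and of simpler invariants of $n$: whether $n$ is a square, whether $n=2x^2$, whether $n$ is represented by $x^2+2y^2$, and the number of ways $n$ is a sum of two positive squares. I would annihilate all of these at once by taking $n=pq$ with distinct primes $p\equiv 7\pmod 8$ and $q\equiv 5\pmod 8$: then $n\equiv 3\pmod 8$, the factor $p$ forces $n$ not to be a sum of two squares (hence not a square, and no representation has a vanishing coordinate), and the factor $q$ forces $n$ not to be represented by $x^2+2y^2$ (hence no representation has two equal coordinates, and $n\ne 3x^2$). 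By Legendre's three‑square theorem such $n$ is a sum of three squares, and by the above every representation uses three distinct positive values, so $p^2(n,3)=r_3(n)/48$. Gauss's class number formula gives $r_3(n)=24\,h(-n)$ for squarefree $n\equiv 3\pmod 8$ with $n>3$, and since $-pq$ has two prime divisors its class number is even; so the whole question collapses to: for every $m\ge 1$, produce distinct primes $p\equiv 7\pmod 8$ and $q\equiv 5\pmod 8$ with $h(-pq)=2m$.

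That last step is the real obstacle. Heilbronn's theorem ($h(-d)\to\infty$) together with the effective lower bounds of Goldfeld and Gross--Zagier makes the set of discriminants of any prescribed class number finite and computable, so each individual $m$ is decidable, and a direct search of the kind used for Theorems~\ref{Taxicab(2,6,36)} and~\ref{Taxicab(2,7,44)} confirms the $j=3$ case for all $m$ below any chosen bound. But I know of no unconditional proof that \emph{every} positive even integer is realized as a class number $h(-pq)$ of this shape, and producing one is where the argument must be finished: either by a hands‑on construction via genus theory and the known classifications of small class numbers, by enlarging the supply of admissible discriminants through non‑maximal orders, or by finding an explicit family $\{n_m\}$ with $p^2(n_m,3)=m$ — though the obvious candidate families, such as $n_m=3\cdot 25^{\,m-1}$, already fail at $m=3$ (where $1875$ has at least five representations as a sum of three positive squares). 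In short, $j=2$ is routine, while $j=3$ hinges on a surjectivity statement for class numbers that I do not see how to establish unconditionally.
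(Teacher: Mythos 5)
The statement you are addressing is presented in the paper as a \emph{conjecture}; the authors give no proof of it, so there is no argument of theirs to compare yours against. Judged on its own terms, your proposal is honest and partially successful but does not prove the statement. The $j=2$ case is complete and correct: for $p\equiv 1\pmod 4$ and $n=p^{2m-1}$ all divisors of $n$ are $\equiv 1\pmod 4$, so $r_2(n)=8m$, and since $n$ is odd and not a square every ordered signed solution lies in an orbit of size exactly $8$, giving $p^2(n,2)=m$. Your remark on $j=1$ is also worth keeping: $p^2(n,1)\in\{0,1\}$, so the conjecture as literally stated is \emph{false} for $j=1$ and $m\ge 2$, and must be read with $j\in\{2,3\}$ (the paper's own table entry $\text{Taxicab}(2,1,2)=2$ notwithstanding).

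The genuine gap is the $j=3$ case, and you have correctly located it yourself. Your reduction is sound as far as it goes: for squarefree $n=pq\equiv 3\pmod 8$ with $p\equiv 7$, $q\equiv 5\pmod 8$, every representation $x^2+y^2+z^2=n$ has all coordinates odd (so nonzero) and no two equal (since $q$ appears to an odd power, $n$ is not represented by $x^2+2y^2$), whence $p^2(n,3)=r_3(n)/48=h(-n)/2$, and genus theory makes $h(-pq)$ even. But the conclusion you need --- that every even positive integer occurs as $h(-pq)$ for primes in these residue classes --- is an unproven surjectivity statement about class numbers that is plausibly harder than the original problem; Heilbronn/Goldfeld--Gross--Zagier only make each individual value of $m$ decidable, which yields verification below any finite bound, not a proof for all $m$. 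Relaxing to general squarefree $n\equiv 3\pmod 8$ does not obviously help, since one then loses control of the representations with repeated coordinates and of the parity of $h(-n)$. So the argument establishes the conjecture for $j=2$, refutes it for $j=1$, and leaves $j=3$ open; it is not a proof of the statement.
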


\section{Higher Positive Powers}
For which values of $m$ does there exist a positive integer that can be written as the sum of $j$ positive $k$\textsuperscript{th} powers in exactly $m$ ways? Analogous to the end behavior in the setting of positive squares, it appears that for a fixed $m$, as $j$ increases, one of two things occurs for $\text{Taxicab}(k,j,m)$.

Conjecture \ref{cubes_conj} is supported by numerical evidence for $k=3$ and $4$. Figure \ref{bitmap_cubes} (Left), is a bitmap of $\text{Taxicab}(3,j,m)$, where conjectured values of $j$ for $7 \leq j \leq 200$ run along the horizontal axis and values of $m$ for $1 \leq m \leq 500$ run down the vertical axis. Again, a white pixel indicates that a solution to $\text{Taxicab}(3,j,m)$ exists and a black pixel indicates that no such solution exists. 
\begin{figure}[h!]
    \centering
    \includegraphics[scale=.5]{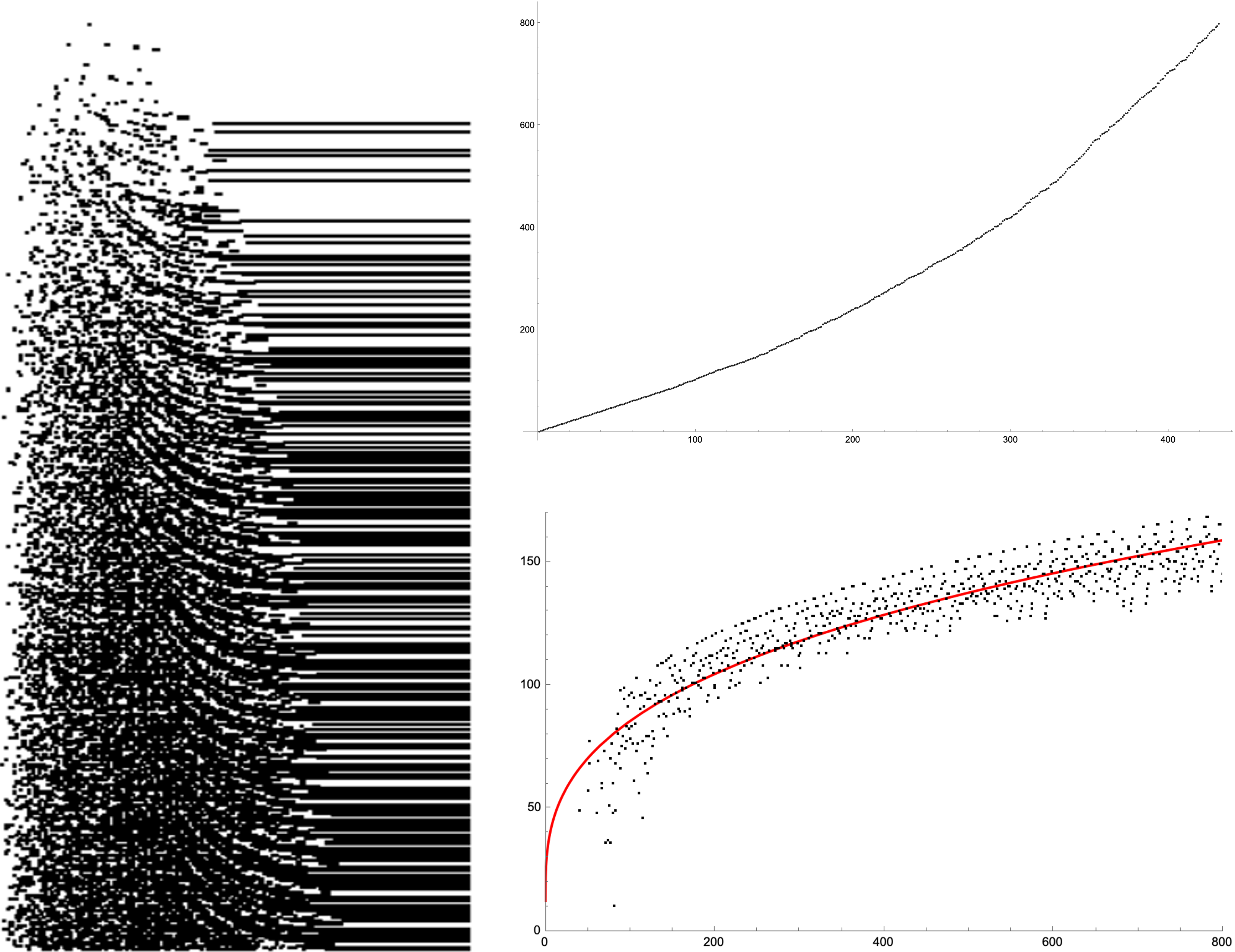}
    \caption{(Left) $\text{Taxicab}(3,j,m)$, (Right Top) $m_i^3$, and (Right Bottom) $J^3(m)$ and $g_3(m)$ for $m \leq 500$ and $7 \leq j \leq 200$}
    \label{bitmap_cubes}
\end{figure}
A familiar pattern emerges in Figure \ref{bitmap_cubes}. There is a chaotic region on the left corresponding to small values of $j$ and a highly regular region on the right corresponding to large values of $j$. These two regions of the graph are again separated by a boundary curve with a familiar shape. Let $J^3(m)$ denote the smallest pair $(m,J)$ such that Conjecture \ref{cubes_conj} holds for all $j \geq J$. Figure \ref{bitmap_cubes} (Right Top) graphs $J^3(m)$ for $m \leq 500$. Utilizing Mathematica, a close fit for $J^3(m)$ appears to be $g_3(x) = 15.8278\sqrt[3]{x}+12.0661$

Values of $m$ for which $\text{Taxicab}(3,j,m)$ exists creates the sequence $\{m_i^3\}$ and its complement sequence $\{\mathbb{N} \setminus m_i^3\}$. Figure \ref{bitmap_cubes} (Right Bottom) plots $J^3(m)$ for $m \leq 500$.

\begin{align*}  
m_i^3&=1,2,3\ldots89,91, 92, 93, 95, 96, 97, 98, 99, 100, 101, 102, 104, 105,\ldots\\
\mathbb{N} \setminus m_i^3&=90, 94, 103, 106, 113, 118, 138, 146, 149, 156, 157, 160, 164, 165,\ldots
\end{align*}

For which values of $m$ does there exist a positive integer that can be written as the sum of exactly $j$ positive fourth powers in exactly $m$ ways? This was G. H. Hardy's question precisely when, after hearing Ramanujan's reply regarding $1729$, Hardy inquired: \cite{Hardy}
\begin{quote}
    I asked him, naturally, whether he could tell me the solution of the corresponding problem for fourth powers; and he replied, after a moment’s thought, that he knew no obvious example, and supposed that the first such number must be very large.
\end{quote}
The answer to Hardy's question is large indeed - the discovery that 
\[
635318657=59^4+158^4=133^4+134^4
\]
is credited to Euler \cite{Euler} but it wasn't until 1957 that Leech \cite{Leech} proved $635318657$ is in fact $\text{Taxicab}(4,2,2)$. The existence of positive integers that are the sum of $2$ positive $k$\textsuperscript{th} powers in exactly $2$ ways is still open. In terms of Diophantine equations, it is unknown \cite{Ekl} if the equation $A^k+B^k=C^k+D^k$ has solutions for $k > 4$. 

Conjecture \ref{cubes_conj} is further supported by Figure \ref{bitmap_fourths} (Left), a bitmap of $\text{Taxicab}(4,j,m)$, where values of $j$ for $j \leq 500$ run along the horizontal axis and values of $m$ for $1 \leq m \leq 500$ run down the vertical axis. Again, a white pixel indicates that a solution to $\text{Taxicab}(4,j,m)$ exists and a black pixel indicates that no such solution exists. 
\begin{figure}[h!]
    \centering
    \includegraphics[scale=.45]{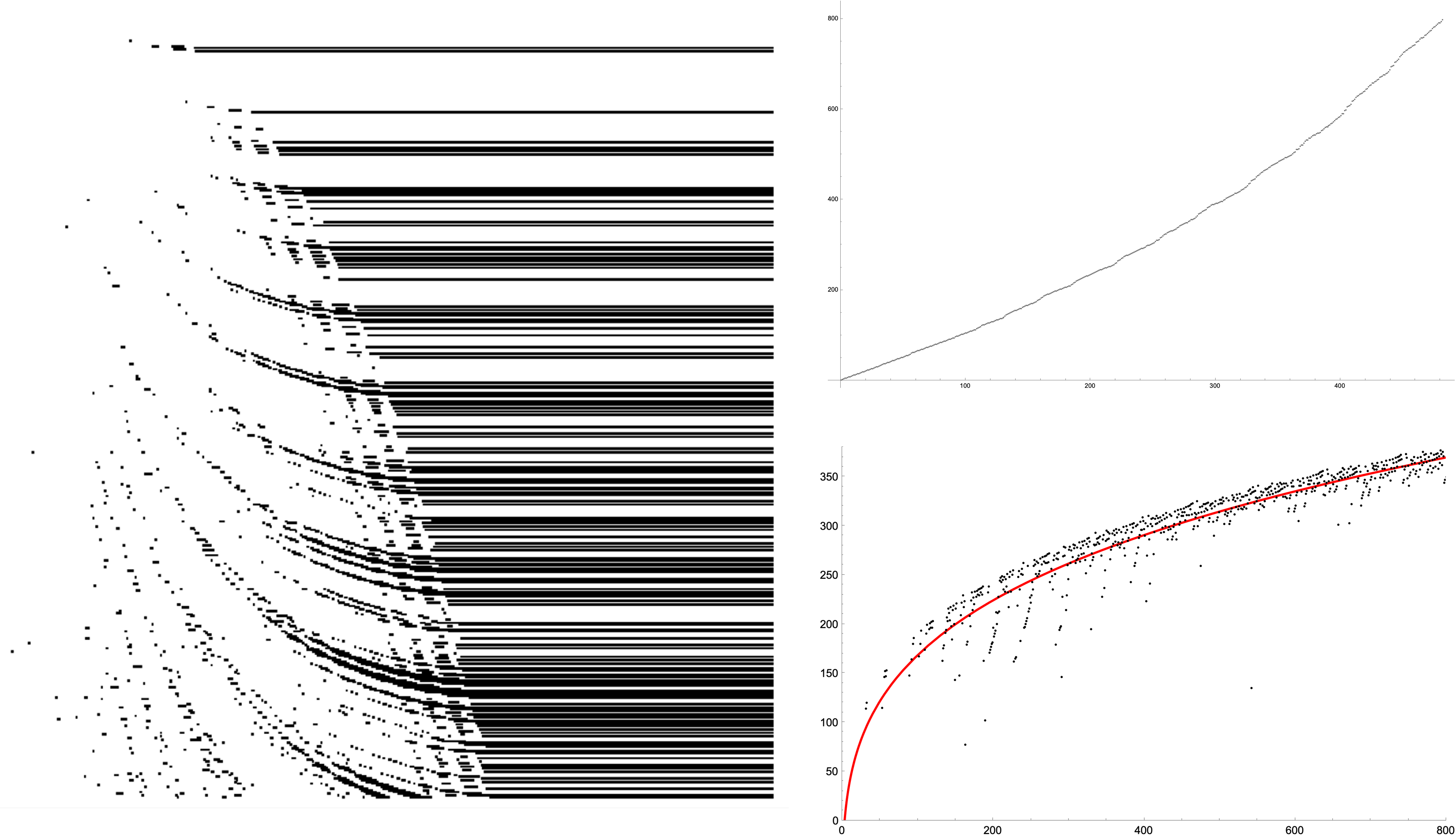}
    \caption{(Left) $\text{Taxicab}(4,j,m)$, (Right Top) $m_i^4$, and (Right Bottom) $J^4(m)$ and $g_4(m)$ for $m \leq 500$ and $7 \leq j \leq 200$}
    \label{bitmap_fourths}
\end{figure}
A familiar pattern emerges in Figure \ref{bitmap_fourths}. There is a chaotic region on the left corresponding to small values of $j$ and a highly regular region on the right corresponding to large values of $j$. These two regions of the graph are again separated by a curve. Let $J^4(m)$ denote the smallest pair $(m,J)$ such that Conjecture \ref{cubes_conj} holds for all $j \geq J$. Figure \ref{bitmap_fourths} (Right Top) graphs $J^4(m)$ for $m \leq 500$. Utilizing Mathematica, a close fit for $J^3(m)$ appears to be $g_4(x) = 43.106\sqrt[4]{x}-28.3045$.

Values of $m$ for which $\text{Taxicab}(4,j,m)$ exists creates the sequence $\{m_i^4\}$ and its complement sequence $\{\mathbb{N} \setminus m_i^4\}$. Figure \ref{bitmap_fourths} (Right Bottom) also plots $J^4(m)$ for $m \leq 500$.
\begin{align*}  
m_i^4&=1,2,\ldots32,34, 35\ldots, 56, 58, 60,61,\ldots, 94,96,97,\ldots\\
\mathbb{N} \setminus m_i^4&=33, 57, 59, 95, 113, 116, 118, 120, 140, 141, 143, 144, 148, 152, 160,\ldots
\end{align*}

For powers higher than $k=4$, there is every reason to suppose that something similar occurs for every $m$ as $j$ gets large. Heuristically, for very large $n$, there are many representations of $n$ as the sum of $j$ positive $k$\textsuperscript{th} powers. To determine if $\text{Taxicab}(k,j,m)$ exists for a particular $m$ and $j$, the only possibilities are relatively small values of $n$. Because there are so few possibilities, there are many values of $\text{Taxicab}(k,j,m)$ that do not exist.

\section{Final Thoughts}
There are seemingly endless variations of partition functions and Taxicab numbers  obtained by manipulating $k$, $j$, and $m$. What is the smallest number that can be expressed as the sum of $j$, \textit{integer} $k$\textsuperscript{th} powers in exactly $m$ ways (without the restriction that the $k$\textsuperscript{th} powers are positive)? For $k=3$ and $j=2$, these numbers are called \textit{Cabtaxi} numbers, denoted $\text{Cabtaxi}(3,j,m)$. Note that $0^k$ is allowed as a summand in this definition. For example, Vi\`ete \cite{Vi`ete} discovered in 1591 that  $\text{Cabtaxi}(3,2,2) = 91 = 3^3+4^3 = 6^2-5^2$. Escott \cite{Escott} computed $\text{Cabtaxi}(3,2,3)$ in 1902, followed by Rathburn \cite{Rathbun} in 1992 who computed $\text{Cabtaxi}(3,2,4),\ldots,\text{Cabtaxi}(3,2,7)$, then Bernstein \cite{Bernstein} in 1998 computed $\text{Cabtaxi}(3,2,8)$, Moore \cite{Moore} in 2005 computed $\text{Cabtaxi}(3,2,9)$, and in 2006, Boyer \cite{Boyer} conjectured correctly $\text{Cabtaxi}(3,2,10)$, which was proven in 2008 by Hollarbach \cite{Hollerbach}. Even with modern computing power, only $10$ Cabtaxi numbers are currently known.
\begin{table}[h!]
    \begin{tabular}{cccc}
        $91=3^3+4^3$&$728= 8^3 + 6^3$&$2741256 = 108^3 + 114^3$&$6017193 = 166^3+113^3$\\
        $\textcolor{white}{91}=6^3-5^3$&$\textcolor{white}{728}= 9^3 - 1^3$&$\textcolor{white}{2741256} = 140^3 - 14^3$&$\textcolor{white}{6017193} = 180^3+57^3$\\
        &$\textcolor{white}{72888}= 12^3 - 10^3$&$\textcolor{white}{2741256} = 168^3 - 126^3$&$\textcolor{white}{6017193} = 185^3-68^3$\\
        &&$\textcolor{white}{2741256} = 207^3 - 183^3$&$\textcolor{white}{6017193} = 209^3-146^3$\\
        &&&$\textcolor{white}{6017193} =246^3-207^3$\\
        &&&\\
        $\text{Cabtaxi}(3,2,2)$&$\text{Cabtaxi}(3,2,3)$&$\text{Cabtaxi}(3,2,4)$&$\text{Cabtaxi}(3,2,5)$
    \end{tabular}
\end{table}
Define \textit{generalized Cabtaxi numbers} as the smallest positive integer $n$ such that $n=x_1^k \pm x_2^k \pm \ldots \pm x_j^k$ in exactly $m$ ways. The relationship with the partition function $p^k(n,j)$ no longer holds because partitions are strictly defined as decreasing sequences of positive integers that \textit{sum} to $n$. Without the restriction to addition, perhaps something analogous occurs with Cabtaxi numbers, or perhaps a completely different pattern will emerge.
\begin{question}
    For each $m$, what is the end behavior of $\text{Cabtaxi}(k,j,m)$ for a fixed $k$ as $j$ gets large?
\end{question}

There are also popular iterations of Taxicab numbers that involve further restrictions on the summands of $n$. What is the smallest number that can be expressed as the sum of $j$ \textit{prime} (or \textit{distinct}, or \textit{coprime}, etc.) positive $k$\textsuperscript{th} powers in exactly $m$ ways? The smallest $\text{Taxicab}(3,2,2)$ and $\text{Cabtaxi}(3,2,2)$ where the summands are prime are:
\begin{align*}
6058655748&=613^3 + 18233^3           &  62540982 &=3973^3 - 313^3\\
&= 10493^3 + 16993^3         &  &= 18673^3 - 18613^3.
\end{align*}

\begin{question}
    For each $m$, what is the end behavior of $\text{Taxicab}(k,j,m)$ and $\text{Cabtaxi}(k,j,m)$ for a fixed $k$ as $j$ gets large and the summands of $n$ are restricted to prime (or distinct, or coprime, etc.) positive $k$\textsuperscript{th} powers?
\end{question}

\section{Competing Interests}
No support was received from any organization for the submitted work nor was any funding received by any author. The authors have no competing interests, financial or otherwise with any organization as relates to this work.

\bibliographystyle{plain}
\typeout{}
\bibliography{taxicab}{}

\end{document}